\newtheorem{tr}{Theorem}[section]
\newtheorem*{tr*}{Theorem}
\newtheorem{lemma}[tr]{Lemma}
\newtheorem{pr}[tr]{Proposition}
\newtheorem{cor}[tr]{Corollary}
\newtheorem{df}[tr]{Definition}
\newtheorem{rem}[tr]{Remark}
\def\differential{d}
\renewcommand\d\differential
\DeclareMathOperator\im{Im}
\DeclareMathOperator\Hom{Hom}
\DeclareMathOperator\Aut{Aut}
\DeclareMathOperator\Ext{Ext}
\DeclareMathOperator\id{id}
\DeclareMathOperator\GL{GL}
\DeclareMathOperator\Bl{Bl}
\DeclareMathOperator\Sing{Sing}
\def\k{\Bbbk}
\renewcommand\Im\im
\def\bb#1{\mathbb #1}
\def\cal#1{\mathcal #1}
\def\fcal#1{\boldsymbol{\cal{#1}}}
\def\dd#1#2{\frac{\partial #1}{\partial#2} }
\def\ra{\rightarrow}
\def\xra{\xrightarrow}
\def\mto{\mapsto}
\def\pmat#1{\begin{pmatrix}#1\end{pmatrix}}
\def\smat#1{\left(\begin{smallmatrix}#1\end{smallmatrix}\right)}
\def\point#1{\langle #1 \rangle}
\def\refeq#1{$(\ref{#1})$}
\def\cExt{{\mathscr E\!\mathit{xt} } }
\def\P{\bb P}
\def\O{\cal O}
\def\ten{\otimes}
\def\sqten{\boxtimes}
\let\le\leq
\let\ge\geq
\let\star *
\let\subset\subseteq
\def\defeq{:=}
\def\iso{\cong}
\def\tilde{\widetilde}
\title[Universal plane curve and moduli spaces of $1$-dimensional coherent sheaves]{Universal plane curve and moduli spaces of $1$-dimensional coherent sheaves}
\author{Oleksandr Iena}
\address{University of Luxembourg, Campus Kirchberg\\
Mathematics Research Unit\\
6, rue Richard Coudenhove-Kalergi\\
L-1359 Luxembourg City\\
Grand Duchy of Luxembourg}
\email{oleksandr.iena@uni.lu}
\date{}
\subjclass[2010]{14D20, 14C20}
\keywords{Simpson moduli spaces, coherent sheaves, vector bundles on curves}
\begin{document}
\begin{abstract}
We show that the universal plane curve $M$ of fixed degree $d\ge 3$ can be seen as
a closed subvariety in a certain Simpson moduli space of $1$-dimensional sheaves on $\P_2$ contained in the stable locus.
The universal singular locus of $M$ coincides with the subvariety $M'$ of $M$ consisting of sheaves that are not locally free on their support. It turns out that the blow up $\Bl_{M'}M$ may be naturally seen as a compactification of $M_B=M\setminus M'$ by vector bundles (on support).
\end{abstract}
\maketitle
\tableofcontents

\section{Introduction}
\subsection{Motivation}
C.~Simpson showed in~\cite{Simpson1} that
for an arbitrary projective variety $X$
and for an arbitrary
numerical  polynomial $P\in\bb Q[m]$ there is a coarse moduli space
$M\defeq M_P(X)$ of semi-stable sheaves on $X$ with Hilbert polynomial $P$,
which turns out to be a projective variety.

In general $M$ contains  a closed subvariety $M'$ of sheaves that are not locally free on their support.
Its complement $M_B$ is then  an open  subset whose points are sheaves that are locally free on their support. If $M$ is irreducible, then $M_B$ is dense and hence one could consider $M$ as a compactification of $M_B$. We call the sheaves
from the boundary $M\smallsetminus M_B$ \textit{singular}.
It is an interesting question whether and how one could replace the
boundary of singular sheaves by one which consists entirely of vector
bundles with varying and possibly reducible supports. This problem for one-dimensional sheaves on a projective plane was dealt with in~\cite{MyGermanDiss} and \cite{IenaTrmPaper1}. The case of torsion-free sheaves on a surface is considered in~\cite{TrautmannTikhMark}.

It is known (see~\cite{LePotier},~\cite{Freiermuth}) that the universal plane cubic curve may be identified with the fine Simpson moduli space of stable coherent sheaves on $\P_2$ with Hilbert polynomial $3m+1$. In~\cite{IenaTrmPaper1} it has been shown that the blowing-up of the universal plane cubic curve along its universal singular locus may be seen as a construction which substitutes the sheaves which are not vector bundles (on their $1$-dimensional support) by vector bundles (on support).

\subsection{Main results of the paper}
The main aim of this note is to show the following:
\begin{itemize}
\item  the universal plane curve of fixed degree $d\ge 3$ can be seen as a closed subvariety of codimension $\frac{d(d-3)}{2}$ in the Simpson moduli space of semistable sheaves on $\P_2$ with Hilbert polynomial
    \[
    dm+\frac{d(3-d)}{2}+1
    \]
     contained in the stable locus,
     this closed subvariety coincides with the locus of sheaves with non-zero global sections (Proposition~\ref{pr: subvariety in Simpson});
    \medskip
\item    the blowing up along the universal singular locus can be seen as a process which substitutes the singular sheaves, i.e., those which are not locally free on their support, by vector bundles (on support) (Theorem~\ref{tr: main}).
\end{itemize}
This generalizes the construction presented in~\cite{MyGermanDiss},~\cite{IenaTrmPaper1}.
Moreover, some important details omitted for the sake of brevity in~\cite{IenaTrmPaper1}  are presented (in bigger generality) here.

\subsection{Some notations and conventions.}
In this paper we use notations and constructions from~\cite{IenaTrmPaper1}, in particular $\k$ is an algebraically closed field of characteristic zero, we work in the category of separated schemes of finite type over $\k$ and
call them varieties, using only their closed points. We do not restrict ourselves to reduced or irreducible varieties.
Dealing with homomorphisms between direct sums of line bundles and identifying
them with matrices, we consider the matrices acting on elements from the
right, i.~e, the composition $X\xra{A}Y\xra{B}Z$ is given by the matrix
$A\cdot B$.

In~\cite{IenaTrmPaper1} surfaces $D(p)$ were defined for every point $p\in \P_2$.
$D(p)$ consists of two irreducible components $D_0(p)$ and $D_1(p)$,
$D_0(p)$ being the blow up of $\P_2$ at $p$ and $D_1(p)$
being another projective plane, such that these components intersect along the
line $L(p)$ which is the exceptional divisor of $D_0(p)$. Each surface
$D(p)$ can be defined as the subvariety in $\P_2\times\P_2$ with equations
$u_0x_1$, $u_0x_2$, $u_1x_2-u_2x_1$ where the $x_i$ respectively $u_i$ are
the homogeneous coordinates of the first respectively second $\P_2$, such that
the first projection contracts $D_1(p)$ to $p$ and describes $D_0(p)$
as the blow up. 

As in~\cite{IenaTrmPaper1}, $\O_{D(p)}(a,b)$ denotes the invertible sheaf
induced by $\O_{\P_2}(a)\sqten\O_{\P_2}(b)$.

\subsection{Structure of the paper}
In Section~\ref{section: quotient} we describe the universal curve as a quotient of a space of certain injective morphisms between rank $2$ vector bundles on $\P_2$. In Section~\ref{section: stability} we show that the universal curve is a subvariety of an appropriate Simpson moduli space. Proposition~\ref{pr: subvariety in Simpson} is proved here.
In Section~\ref{section: singular locus} we identify the universal singular locus with the  subvariety of singular sheaves in $M$.
In Section~\ref{section: blow up} we prove Theorem~\ref{tr: main}, i.~e., we show that the blowing up along the universal singular locus can be seen as a process which substitutes the singular sheaves by vector bundles (on support).

\subsection{Acknowledgements}
The author thanks Mario Maican for his valuable comments regarding Section~\ref{section: stability}. Many thanks as well to an unknown referee for helpful comments, improvements, and suggestions.

\section{Universal curve as a quotient}\label{section: quotient}
Let $V$ be a $3$-dimensional vector space over $\k$. Let $\P_2=\P V$ be the corresponding projective space. Let $S^d V$ be the $d$-th symmetric power of $V$. Then $\P_N=\P (S^dV^*)$
may be seen as the space of plane curves of degree $d$. Its dimension is  $N=\frac{(d+2)(d+1)}{2}-1$. Recall that a curve is identified with its equation up to multiplication by a non-zero constant. Assume that $d\ge 3$.

Consider the universal plane curve of degree $d$
\[
M=\{(C, p)\ |\ p\in C\}=
\{(\point{f}, \point{x})\in \P_N\times \P_2\ |\ f(x)=0\}.
\]
This is a smooth projective variety of dimension $N+1=\frac{(d+2)(d+1)}{2}$.

Let $X$ be the space of morphisms
$2\O_{\P_2}(-d+1)\xra{A}\O_{\P_2}(-d+2)\oplus \O_{\P_2}$,
 $A=\pmat{z_1&q_1\\z_2&q_2}$, with linear independent $z_1$ and $z_2$ and with non-zero determinant. Note that we consider matrices acting on the right.
Then one sees that $X$ is an open subvariety in the affine variety
$\Hom(2\O_{\P_2}(-d+1), \O_{\P_2}(-d+2)\oplus \O_{\P_2})$, which is isomorphic to $\k^{d^2+d+6}$.

We fix a basis $\{x_0, x_1, x_2\}$ of $H^0(\P_2, \O_{\P_2}(1))$ and for $A\in X$ we will write
\[
z_1=a_0x_0+a_1x_1+a_2x_2,\quad z_2=b_0x_0+b_1x_1+b_2x_2,
\]
\[
q_1=\sum_{\substack{i,j\ge 0,\\i+j\le d-1}} A_{ij}x_0^{d-1-i-j}x_1^ix_2^j, \quad
q_2=\sum_{\substack{i,j\ge 0,\\i+j\le d-1}} B_{ij}x_0^{d-1-i-j}x_1^ix_2^j.
\]
Since all morphisms
in $X$ are injective, $X$ may be seen as a parameter space of sheaves given by resolutions
\begin{equation}\label{eq:resolution one point}
0\ra 2\O_{\P_2}(-d+1)\xra{\smat{z_1&q_1\\z_2&q_2}}\O_{\P_2}(-d+2)\oplus \O_{\P_2}\ra \cal F\ra 0.
\end{equation}

\begin{rem}
One can easily see that the Hilbert polynomial of such sheaves is $dm+\frac{d(3-d)}{2}+1$.
\end{rem}

 There is a morphism $X\xra{\nu} M$, $\smat{z_1&q_1\\z_2&q_2}\mto (\point{z_1q_2-z_2q_1}, z_1\wedge z_2)$, where $z_1\wedge z_2$ denotes the common zero of $z_1$ and $z_2$.
 \begin{lemma}
 $\nu$ is surjective.
 \end{lemma}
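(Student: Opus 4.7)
My plan is to construct, for an arbitrary point $(\point{f}, \point{x}) \in M$, an explicit preimage under $\nu$. Since $\nu$ sends $\smat{z_1 & q_1 \\ z_2 & q_2}$ to $(\point{z_1 q_2 - z_2 q_1}, z_1 \wedge z_2)$, I need to choose $z_1,z_2$ vanishing at $x$ (and linearly independent) and then produce $q_1,q_2$ of degree $d-1$ with $z_1 q_2 - z_2 q_1 = f$. The matrix obtained this way will automatically lie in $X$ since its determinant is $f \neq 0$.

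First I would choose any basis $z_1, z_2$ of the $2$-dimensional space of linear forms on $V$ vanishing at $x$; these are automatically linearly independent and satisfy $z_1 \wedge z_2 = \point{x}$.

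The core step is then to realise $f$ as an element of the ideal $(z_1, z_2) \subset \k[x_0,x_1,x_2]$. Here one uses that $z_1, z_2$ form a regular sequence of linear forms, so the homogeneous ideal they generate consists of exactly those homogeneous polynomials vanishing on the line in $V$ corresponding to $x$. Since $f(x) = 0$ by the definition of $M$, the homogeneity of $f$ gives $f \in (z_1, z_2)$. A clean way to see this without invoking Koszul machinery is a change of coordinates so that $z_1 = x_0$ and $z_2 = x_1$: then $x = \point{(0{:}0{:}1)}$ and $f(0,0,x_2)$ is a homogeneous polynomial in one variable vanishing at a nonzero point, hence identically zero, so $f \in (x_0, x_1)$. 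Since $f$ is homogeneous of degree $d$, I can write $f = z_1 Q_2 + z_2 R$ with $Q_2, R$ homogeneous of degree $d-1$.

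Setting $q_2 \defeq Q_2$ and $q_1 \defeq -R$, the matrix $A \defeq \smat{z_1 & q_1 \\ z_2 & q_2}$ has $\det A = z_1 q_2 - z_2 q_1 = f \neq 0$ and linearly independent first column, so $A \in X$, and by construction $\nu(A) = (\point{f}, \point{x})$. I expect no serious obstacle: the only non-formal point is the ideal-membership statement in the previous paragraph, and this is a standard observation about two independent linear forms on $\P_2$.
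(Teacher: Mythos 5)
Your proposal is correct and follows essentially the same route as the paper: pick two independent linear forms $z_1, z_2$ cutting out the point, use $f(x)=0$ to write $f = z_1 q_2 - z_2 q_1$, and read off the preimage matrix. You merely spell out the ideal-membership step $f \in (z_1,z_2)$, which the paper leaves implicit.
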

 \begin{proof}
 Let $(f, p)\in M$. Choose two linear independent linear forms $z_1$ and $z_2$ such that $p=z_1\wedge z_2$. Since $f(p)=0$, one can write $f=z_1q_2-z_2q_1$ for some forms $q_1$ and $q_2$ of degree $d-1$. Then $\smat{z_1&q_1\\z_2&q_2}$ is a preimage of $(f, p)$.
 \end{proof}
 \begin{lemma}
 Two matrices $A_1, A_2\in X$ lie in the same fibre of $\nu$ if and only if there exist $g\in \GL_2(\k)$ and $h=\smat{\lambda &q\\0&\mu}\in \Aut(\O_{\P_2}(-d+2)\oplus \O_{\P_2})$ such that $gA_1h=A_2$.
 \end{lemma}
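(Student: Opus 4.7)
My plan is to prove the two implications separately; the ``if'' direction is an immediate computation, while the ``only if'' direction requires a unique-factorization argument to produce the form $q$ in the upper-right entry of $h$. Throughout I write $A_i=\smat{z_1^{(i)}&q_1^{(i)}\\ z_2^{(i)}&q_2^{(i)}}$ for $i=1,2$, and recall that $\nu(A_i)$ records the point $z_1^{(i)}\wedge z_2^{(i)}$ together with the curve $\langle z_1^{(i)}q_2^{(i)}-z_2^{(i)}q_1^{(i)}\rangle=\langle\det A_i\rangle$.

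For the reverse implication, suppose $A_2=gA_1h$. Left multiplication by $g\in\GL_2(\k)$ replaces the first column of $A_1$ by $g\smat{z_1^{(1)}\\ z_2^{(1)}}$, whose entries form a new basis of the same pencil of linear forms and so have the same common zero. Right multiplication by $h=\smat{\lambda&q\\ 0&\mu}$ scales the first column by $\lambda$, preserving its common zero as well. Both operations multiply $\det A_1$ by the non-zero scalar $\lambda\mu\det g$, so the class of the determinant in $\P_N$ is preserved and $\nu(A_2)=\nu(A_1)$.

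For the forward implication, assume $\nu(A_1)=\nu(A_2)$. Equality of the points says that the two pairs $(z_1^{(i)},z_2^{(i)})$ are bases of the same $2$-dimensional pencil of linear forms vanishing at $p$; hence there is a $g\in\GL_2(\k)$ such that $gA_1$ has the same first column as $A_2$. Replacing $A_1$ by $gA_1$, with new second-column entries $\tilde q_1,\tilde q_2$, I may assume that $A_1$ and $A_2$ share the first column, and equality of curves becomes $\det A_2=\alpha\det A_1$ for some $\alpha\in\k^*$. This rearranges to
\[
z_1^{(2)}\bigl(q_2^{(2)}-\alpha\tilde q_2\bigr)=z_2^{(2)}\bigl(q_1^{(2)}-\alpha\tilde q_1\bigr).
\]
Since $z_1^{(2)}$ and $z_2^{(2)}$ are coprime linear forms in the UFD $\k[x_0,x_1,x_2]$, each $z_i^{(2)}$ divides $q_i^{(2)}-\alpha\tilde q_i$; substituting back and cancelling $z_1^{(2)}z_2^{(2)}$ shows that the two quotients coincide in a single form $q$ of degree $d-2$. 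Taking $h=\smat{1&q\\ 0&\alpha}$, a direct check gives $(gA_1)h=A_2$.

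The main technical step is the UFD argument producing $q$; everything else is linear algebra and bookkeeping. I would also note that the normalization $\lambda=1$ in $h$ is harmless, as any non-zero $\lambda$ can be absorbed into the left factor $g$. The upper-triangular form of $h$ reflects the fact that for $d\ge 3$ the only automorphisms of $\O_{\P_2}(-d+2)\oplus\O_{\P_2}$ are upper triangular, because $\Hom(\O_{\P_2},\O_{\P_2}(-d+2))=H^0(\P_2,\O_{\P_2}(-d+2))=0$; this justifies the asserted shape of $h$ from the start.
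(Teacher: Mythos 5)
Your proof is correct and follows essentially the same route as the paper's: reduce to a common first column by a suitable $g\in\GL_2(\k)$, match the determinants up to a scalar, and use coprimality of the two linear forms to extract the form $q$ (of degree $d-2$) giving the upper-triangular $h$. The only cosmetic difference is that you absorb the scalar $\alpha$ into $h$ rather than rescaling a column of $A_1$, and your closing remark that $\Hom(\O_{\P_2},\O_{\P_2}(-d+2))=0$ forces $h$ to be upper triangular is a correct bonus observation.
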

\begin{proof}
It is clear that $gA_1h=A_2$ implies that both $A_1$ and $A_2$ lie in the same fibre of $\nu$. Let us assume that $A_1$ and $A_2$ lie in the same fibre of $\nu$. Then in particular $pr_2\circ \nu(A_1)=pr_2\circ \nu (A_2)$, where $pr_2: M\subset \P_N\times \P_2\ra \P_2$ is the projection onto the second factor.
Multiplying  $A_1$ by an appropriate $g\in \GL_2(\k)$ we may assume that $A_1=\smat{z_1&q_1\\z_2&q_2}$ and  $A_2=\smat{z_1&q_1'\\z_2&q_2'}$. Since $\point{z_1q_2-z_2q_1}=\point{z_1q_2'-z_2q_1'}$ we obtain $z_1q_2-z_2q_1=\xi(z_1q_2'-z_2q_1')$ for some $\xi\in \k^*$ and thus multiplying the second column of  $A_1$ by $\xi$ we may assume that $\xi=1$. Then $z_1(q_2-q_2')-z_2(q_1-q_1')=0$ and  hence $\smat{q_1- q_1'\\q_2- q_2'}=q\cdot \smat{z_1\\z_2}$ for some form $q$ of degree $d-1$. In other words
\[
\smat{z_1&q_1\\z_2&q_2}=\smat{z_1&q_1'\\z_2&q_2'}\cdot\smat{1&q\\0&1}.
\]
This proves the lemma.
\end{proof}

Note that the group $G=\Aut(2\O_{\P_2}(-d+1))\times \Aut(\O_{\P_2}(-d+2)\oplus \O_{\P_2})$ naturally acts on $X$ by the rule $(g, h)\cdot A=gAh^{-1}$. The last two lemmas show that $M$ is an orbit space of this action.

\begin{lemma}
The stabilizer of an arbitrary element in $X$ coincides with the group
\[
St=\{(\smat{\lambda&0\\0&\lambda},(\smat{\lambda&0\\0&\lambda})\ |\ \lambda\in \k^*\}.
\]
\end{lemma}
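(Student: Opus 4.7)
The plan is to unwind the stabilizer condition $(g,h)\cdot A = A$, which means $gA = Ah$, and exploit the constraints on the entries of $A$ coming from $A\in X$ (namely $z_1,z_2$ linearly independent and $\det A \ne 0$).

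First, I would write $g = \smat{a&b\\c&d}\in\GL_2(\k)$ and, using the previous lemma's description of $\Aut(\O_{\P_2}(-d+2)\oplus\O_{\P_2})$, put $h=\smat{\lambda & q\\0&\mu}$ with $\lambda,\mu\in\k^*$ and $q$ a form of degree $d-2$. I would then spell out the matrix equation $gA=Ah$ column by column. The first column of $gA$ is $\smat{az_1+bz_2\\cz_1+dz_2}$ and the first column of $Ah$ is $\lambda\smat{z_1\\z_2}$, so the linear independence of $z_1,z_2$ forces $a=d=\lambda$ and $b=c=0$; in particular $g=\lambda\cdot\id$.

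Next I would look at the second column, which gives the identity $(\lambda-\mu)\smat{q_1\\q_2}=q\cdot\smat{z_1\\z_2}$ in degree $d-1$. Eliminating $q$ by taking the combination $z_2\cdot(\text{top})-z_1\cdot(\text{bottom})$ yields $(\lambda-\mu)(z_1 q_2-z_2 q_1)=0$. Since $A\in X$ has $\det A = z_1q_2-z_2q_1 \ne 0$, this forces $\lambda=\mu$. Feeding this back into $(\lambda-\mu)\smat{q_1\\q_2}=q\smat{z_1\\z_2}$ gives $q\cdot z_1 = q\cdot z_2 = 0$, so $q=0$ (e.g.\ because $z_1\ne 0$ and the polynomial ring is a domain). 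Thus $(g,h)=(\lambda\cdot\id,\lambda\cdot\id)\in St$, and the reverse inclusion $St\subset\mathrm{Stab}(A)$ is obvious by direct substitution.

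The argument is essentially a routine bookkeeping computation — there is no real obstacle beyond carefully keeping track of which entries of $h$ can be nonzero (here the vanishing of $h_{21}$ is automatic since $\Hom(\O_{\P_2},\O_{\P_2}(-d+2))=0$ for $d\ge 3$) and using $\det A \ne 0$ at exactly the right place to separate $\lambda$ from $\mu$. Since the conclusion holds for every $A\in X$, the stabilizer is independent of the point, which also foreshadows that the quotient $X/G$ should be well-behaved (a geometric quotient modulo the free action of $G/St$).
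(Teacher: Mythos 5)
Your proof is correct and follows essentially the same route as the paper's: the first column together with the linear independence of $z_1,z_2$ forces $g=\lambda\cdot\id$, and then $\det A\neq 0$ is used to conclude $\lambda=\mu$ and $q=0$. The only cosmetic difference is that you eliminate $q$ by the combination $z_2\cdot(\text{top})-z_1\cdot(\text{bottom})$, while the paper reads off $\lambda=\mu$ from $A=\lambda^{-1}Ah$ directly; both hinge on the nonvanishing of the determinant in the same way.
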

\begin{proof}
Let $A=\smat{z_1&q_1\\z_2&q_2}$ and suppose $g$ and $h$ satisfy $gA=Ah$. Write $h=\smat{\lambda&q\\0&\mu}$. Then $g\smat{z_1\\z_2}=\lambda \smat{z_1\\z_2}$ and $(g-\lambda\cdot\id)\smat{z_1\\z_2}=0$. Since $z_1$ and $z_2$ are linear independent, one concludes that $g=\lambda\cdot \id$. Then $A=\lambda^{-1} A h$ and since $\det A\neq 0$ one concludes that $\lambda=\mu$ and $z_1q=z_2q=0$. Hence $h=\lambda\cdot \id$. This completes the proof.
\end{proof}
Denote the group $G/St$ by $\P G$. Then $\P G$ acts freely on $X$.
\begin{lemma}
There is a local section of $\nu$.
\end{lemma}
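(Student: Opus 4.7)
The plan is to construct an explicit algebraic section on a Zariski open neighborhood of an arbitrary point $(\point{f_0},\point{x_0})\in M$. By the $\GL_3$-action on $\P_2$, which lifts in the obvious way to both $X$ and $M$, we may assume $\point{x_0}=(1{:}0{:}0)$. I will then work on the open subset
\[
W = M\cap(\P_N^{\circ}\times U_0),
\]
where $U_0=\{x_0\neq 0\}\subset\P_2$ and $\P_N^{\circ}\subset\P_N$ is an affine chart around $\point{f_0}$ on which some coefficient of the defining form is normalized to $1$, so that every class $\point{f}\in\P_N^{\circ}$ has a canonical representative $f\in H^0(\P_2,\O_{\P_2}(d))$ depending algebraically on $\point{f}$.

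For $p=(1{:}p_1{:}p_2)\in U_0$ set $z_1(p)=x_1-p_1x_0$ and $z_2(p)=x_2-p_2x_0$. These are linearly independent linear forms vanishing exactly at $p$ and depending algebraically on $p$, so the first column of the sought matrix is in place. For the second column, the task is to produce forms $q_1(f,p),q_2(f,p)$ of degree $d-1$, depending polynomially on $(f,p)$, such that $f=z_1(p)q_2(f,p)-z_2(p)q_1(f,p)$ whenever $f(p)=0$. I would achieve this via the division algorithm for the regular sequence $(z_1,z_2)$: substitute $x_1=z_1(p)+p_1x_0$ and $x_2=z_2(p)+p_2x_0$ into $f$, expand by the binomial theorem, and split the result into three parts, namely the terms containing $z_1$, the terms free of $z_1$ but containing $z_2$, and the pure $x_0^d$ part. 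The first two parts contribute $z_1(p)\cdot q_2$ and $-z_2(p)\cdot q_1$ respectively, while the third equals $f(1,p_1,p_2)\cdot x_0^d$ and hence vanishes by the hypothesis $f(p)=0$.

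Setting
\[
s(\point{f},p)=\pmat{z_1(p)&q_1(f,p)\\ z_2(p)&q_2(f,p)}
\]
yields a morphism $W\to X$: the first column is linearly independent and the determinant equals $f\neq 0$, so $s$ lands in $X$, and $\nu\circ s=\id_W$ follows immediately from $z_1(p)\wedge z_2(p)=p$ together with $\det s(\point{f},p)=f$. Arbitrary points of $M$ are covered by repeating this construction after permuting the role of $x_0,x_1,x_2$ and choosing an appropriate affine chart of $\P_N$. The only mildly non-trivial step is verifying that the binomial expansion genuinely yields polynomial dependence of $q_1,q_2$ on $(f,p)$, which is visible from the explicit formula; I do not anticipate any essential obstacle.
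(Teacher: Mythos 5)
Your proposal is correct and is essentially the paper's own argument: the paper also works on the chart where $p_0\neq 0$ (and a coefficient of $f$ is normalized), takes $z_1=x_1-\xi x_0$, $z_2=x_2-\eta x_0$, and writes $f=(x_1-\xi x_0)G+(x_2-\eta x_0)H$ using $f(p)=0$ to define the section. Your substitution-and-expansion step merely makes explicit the algebraic dependence of $G$ and $H$ on the data, which the paper leaves implicit.
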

\begin{proof}
It is enough to prove the existence of a section $s$ over the points
$(\point{f}, p)=(\point{f}, \point{1, \xi, \eta})$.
Since $f(p)=0$, there is  a unique choice of polynomials $G$ and $H$, in two and three variables respectively, such that
\[
f(x_0, x_1, x_2)=(x_1-\xi x_0)G(x_0, x_1-\xi x_0)+(x_2-\eta x_0)H(x_0, x_1-\xi x_0, x_2-\eta x_0)
\]
hence one can define a local section of $\nu$  by the rule
\[
(\point{f}, \point{1, \xi, \eta})\mto \pmat{x_1-\xi x_0&-H(x_0, x_1-\xi x_0, x_2-\eta x_0)\\x_2-\eta x_0&G(x_0, x_1-\xi x_0)}.
\]
This proves the required statement.
\end{proof}

Using the existence of a local section and Zariski main theorem one shows that $X$ is a principal $\P G$-bundle over $M$. Hence $M$ is a geometrical quotient.

\begin{lemma}
Every morphism of two sheaves with resolution of the type~\refeq{eq:resolution one point} can be uniquely lifted to a morphism of resolutions.
\end{lemma}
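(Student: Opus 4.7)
The plan is to adapt the standard lifting argument for morphisms through locally free resolutions, exploiting that on $\P_2$ enough cohomology vanishes to make the lift not just unique up to homotopy but unique on the nose. Writing the two given resolutions of type~\refeq{eq:resolution one point} as
\[
0 \to 2\O_{\P_2}(-d+1) \xra{A_i} \O_{\P_2}(-d+2) \oplus \O_{\P_2} \xra{\pi_i} \cal F_i \to 0, \quad i = 1, 2,
\]
I would look for a pair $(g,h)$ fitting into a chain map lifting a given $\varphi \colon \cal F_1 \to \cal F_2$.

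First I would produce $h$. The problem of lifting $\pi_1 \varphi \colon \O_{\P_2}(-d+2) \oplus \O_{\P_2} \to \cal F_2$ through the surjection $\pi_2$ has obstruction in $\Ext^1(\O_{\P_2}(-d+2) \oplus \O_{\P_2},\, 2\O_{\P_2}(-d+1))$, which is a direct sum of groups of the form $H^1(\P_2, \O_{\P_2}(k))$ and hence vanishes on $\P_2$. So $h$ exists. For uniqueness, any two lifts differ by a morphism $\O_{\P_2}(-d+2) \oplus \O_{\P_2} \to 2\O_{\P_2}(-d+1)$, whose Hom-space is $2\,H^0(\O_{\P_2}(-1)) \oplus 2\,H^0(\O_{\P_2}(-d+1))$, and this is zero under the hypothesis $d \ge 3$.

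Once $h$ is pinned down, $g$ is forced. Using the commutativity of the rightmost square together with the exactness of the top resolution, $A_1 h \pi_2 = A_1 \pi_1 \varphi = 0$, so $A_1 h$ factors through $\ker \pi_2 = \im A_2$; the injectivity of $A_2$ then yields a unique $g \colon 2\O_{\P_2}(-d+1) \to 2\O_{\P_2}(-d+1)$ with $g A_2 = A_1 h$, completing the lift.

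I do not expect any real obstacle: the entire statement reduces to the cohomology vanishings $H^1(\P_2, \O_{\P_2}(k)) = 0$ for all $k$ together with $H^0(\O_{\P_2}(-j)) = 0$ for $j > 0$. The only small point to track is the row-matrix convention fixed in the introduction, which affects only the order in which matrix products are written, not any of the Hom/Ext computations.
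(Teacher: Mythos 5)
Your proof is correct and follows essentially the same route as the paper: the paper's proof consists precisely of the two vanishings $\Ext^1(\O_{\P_2}(-d+2)\oplus\O_{\P_2},\,2\O_{\P_2}(-d+1))=\Hom(\O_{\P_2}(-d+2)\oplus\O_{\P_2},\,2\O_{\P_2}(-d+1))=0$, which you identify and then correctly unpack into the standard existence/uniqueness argument for lifting through locally free resolutions.
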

\begin{proof}
Follows from
$\Ext^1(\O_{\P_2}(-d+2)\oplus\O_{\P_2}, 2\O_{\P_2}(-d+1))=
\Hom(\O_{\P_2}(-d+2)\oplus\O_{\P_2}, 2\O_{\P_2}(-d+1))=0$.
\end{proof}
This lemma implies the following.

\begin{pr}\label{pr: points of M and sheaves}
The points of $M$ are in one-to-one correspondence with the isomorphism classes of sheaves that possess resolutions of the type~\refeq{eq:resolution one point}.
\end{pr}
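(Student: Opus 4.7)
The plan is to assemble the five preceding lemmas into the claimed bijection. The first three realize $M$ as the orbit space $X/\P G$, so the points of $M$ are in bijection with $G$-orbits in $X$. To each such orbit I would assign the cokernel $\cal F$ of the resolution~\refeq{eq:resolution one point} associated to any representative $A$. That this assignment depends only on the orbit follows from the second lemma: if $A_2 = gA_1 h^{-1}$ with $(g,h)\in G$, the actions of $g$ on $2\O_{\P_2}(-d+1)$ and of $h$ on $\O_{\P_2}(-d+2)\oplus\O_{\P_2}$ form a commuting square of resolutions, inducing a canonical isomorphism of the cokernels.

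The main step is then to check that distinct orbits give non-isomorphic sheaves. Given $A_1, A_2 \in X$ and an isomorphism $\varphi: \cal F_1 \xra{\sim} \cal F_2$, the lifting lemma immediately above provides a unique pair $(g,h) \in \End(2\O_{\P_2}(-d+1)) \times \End(\O_{\P_2}(-d+2)\oplus\O_{\P_2})$ with $gA_1 = A_2 h$. Lifting $\varphi^{-1}$ yields a second pair $(g',h')$ with $g'A_2 = A_1 h'$, and the uniqueness of the lifts, applied to $\id_{\cal F_1}$ and $\id_{\cal F_2}$, forces the compositions $g'g$, $gg'$, $h'h$, $hh'$ to coincide with the identity endomorphisms. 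Hence $(g,h)$ actually lies in $G$, so $A_2 = gA_1 h^{-1}$, and by the second lemma $A_1$ and $A_2$ project to the same point of $M$. Surjectivity of the correspondence onto isomorphism classes of sheaves admitting a resolution of type~\refeq{eq:resolution one point} is tautological, since any such resolution is by definition the cokernel of some $A \in X$.

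Since the entire argument reduces to the preceding lemmas, I do not foresee a genuine obstacle. The only subtlety is the promotion of the lifted endomorphisms $(g,h)$ to honest automorphisms in $G$; this step leans strictly on the uniqueness part of the lifting lemma and is otherwise automatic.
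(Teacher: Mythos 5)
Your proposal is correct and follows exactly the route the paper intends: the paper derives the proposition directly from the lifting lemma (its "proof" is the single sentence "This lemma implies the following"), and your argument — orbits of $\P G$ on $X$ give the points of $M$, and the unique lifting of an isomorphism of cokernels (together with uniqueness applied to the identities to force invertibility) places two matrices with isomorphic cokernels in the same orbit — is precisely the omitted detail.
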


\begin{lemma}\label{lemma:resANDext}
Let $d\ge 3$. For a sheaf $\cal F$ on $\P_2$ the following conditions are equivalent.

1) $\cal F$ has a resolution of  the type~\refeq{eq:resolution one point}.

2) There is a point $(C, p)\in M$ such that $\cal F$ is a non-trivial extension
\[
0\ra \O_C\ra \cal F\ra \k_{p}\ra 0.
\]
\end{lemma}
\begin{proof}
To prove that 1) implies 2) it is enough to notice that there is the following commutative diagram with exact rows and columns with $p$ being the common zero of $z_1$ and $z_2$.
\[
\begin{xy}
(-45,25)*+{0}="-12";
(0,25)*+{0}="02";
(40,25)*+{}="12";
(-65,15)*+{0}="-21";
(-45,15)*+{\O_{\P_2}(-d)}="-11";
(0,15)*+{\O_{\P_2}}="01";
(30,15)*+{\O_{C}}="11";
(40,15)*+{0}="21";
(-65,0)*+{0}="-20";
(-45,0)*+{2\O_{\P_2}(-d+1)}="-10";
(0,0)*+{\O_{\P_2}(-d+2)\oplus \O_{\P_2}}="00";
(30,0)*+{\cal F}="10";
(40,0)*+{0}="20";
(-65,-15)*+{0}="-2-1";
(-45,-15)*+{\cal A}="-1-1";
(0,-15)*+{\O_{\P_2}(-d+2)}="0-1";
(30,-15)*+{\k_p}="1-1";
(40,-15)*+{0.}="2-1";
(-45,-25)*+{0}="-1-2";
(0,-25)*+{0}="0-2";
(30,-25)*+{}="1-2";
{\ar@{->}"-21";"-11"};
{\ar@{->}^-{z_1q_2-z_2q_1}"-11";"01"};
{\ar@{->}"01";"11"};
{\ar@{->}"11";"21"};
{\ar@{->}"-20";"-10"};
{\ar@{->}^-{\smat{z_1&q_1\\z_2&q_2}}"-10";"00"};
{\ar@{->}"00";"10"};
{\ar@{->}"10";"20"};
{\ar@{->}"-2-1";"-1-1"};
{\ar@{->}"-1-1";"0-1"};
{\ar@{->}"0-1";"1-1"};
{\ar@{->}"1-1";"2-1"};
{\ar@{->}"-12";"-11"};
{\ar@{->}_{\smat{-z_2&z_1}}"-11";"-10"};
{\ar@{->}"-10";"-1-1"};
{\ar@{->}"-1-1";"-1-2"};
{\ar@{->}"02";"01"};
{\ar@{->}^{\smat{0&1}}"01";"00"};
{\ar@{->}^{\smat{1\\0}}"00";"0-1"};
{\ar@{->}"0-1";"0-2"};
{\ar@{->}_{\smat{z_1\\z_2}}"-10";"0-1"};
\end{xy}
\]
This induces an extension
\(
0\ra \O_C\ra \cal F\ra \k_{p}\ra 0.
\)
Every splitting $\k_p\ra \cal F$ lifts to a morphism between the exact sequences
\[
2 \O_{P_2}(-d+1)\xra{\smat{z_1\\z_2}} \O_{P_2}(-d+2)\ra k_p\ra 0
\]
and~\refeq{eq:resolution one point}. Using that $z_1$ and $z_2$ are linear independent and $z_1q_2-z_2q_2\neq 0$, one concludes that the splitting is zero, which is a contradiction. Therefore, $\cal F$ is a non-trivial extension.

To prove another implication it is enough to modify the second part of the proof of~\cite[Lemma~5.3]{Freiermuth-Diplom} (so called Horseshoe Lemma argument).
\end{proof}
From Lemma~\ref{lemma:resANDext} one obtains the following reformulation of Proposition~\ref{pr: points of M and sheaves}.
\begin{cor}\label{cor: points of M and extensions}
The points of $M$ are in one-to-one correspondence with the isomorphism classes of
the non-trivial extensions
\[
0\ra \O_C\ra \cal F \ra \k_p\ra 0,\quad (C, p)\in M.
\]
\end{cor}

\section{Stability. Universal curve as a subvariety of Simpson moduli space}\label{section: stability}

The following is a partial case of~\cite[Proposition~1]{MaicanHilb}.
\begin{lemma}\label{lemma:ideal_of_point_on_curve}
 Let $C$ be a plane projective  curve of degree $d$ and let $p$ be a point at $C$. Then the ideal sheaf of a point $p$ at $C$, i.~e., the sheaf $\cal I$ given by the exact sequence
\[
0\ra \cal I\ra \O_C\ra \k_p\ra 0
\]
is stable.
\end{lemma}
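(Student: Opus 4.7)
The strategy is a direct verification of Simpson stability: for any pure $1$-dimensional proper subsheaf $\cal F\subsetneq \cal I$ of multiplicity $r$, I would show
\[
\chi(\cal F)/r<\chi(\cal I)/d.
\]
The numerical setup comes from the Koszul resolution $0\to \O_{\P_2}(-d)\xra{f}\O_{\P_2}\to \O_C\to 0$. Since $f$ is a regular element in the Cohen--Macaulay ring $\O_{\P_2}$, $\O_C$ is pure of dimension $1$, hence so is $\cal I\subset \O_C$. Its multiplicity is $d$ and $\chi(\cal I)=\chi(\O_C)-1=-g$, where $g=(d-1)(d-2)/2$.

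Next I would split into two cases on $r$. The case $r=d$ is immediate: $\cal I/\cal F$ is then zero--dimensional and non--zero, so $\chi(\cal F)<\chi(\cal I)$ and the strict slope inequality follows. The case $r<d$ is the crux. Factor $f=\prod_i f_i^{n_i}$ into distinct irreducibles. At each generic point $\eta_i$ of $C$ the ring $\O_{C,\eta_i}$ is Artinian principal, so the local submodule $\cal F_{\eta_i}\subset \O_{C,\eta_i}$ has the form $(f_i^{k_i})/(f_i^{n_i})$ for some $k_i\in[0,n_i]$, with $r=\sum_i (n_i-k_i)\deg f_i$. Set $g=\prod_i f_i^{k_i}$ and $h=f/g$ of degree $r$. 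Then $(g)/(f)\subset \O_C$ has the same localisation as $\cal F$ at every $\eta_i$; since the quotient $\O_C/(g)/(f)\iso \O_{V(g)}$ is pure of dimension $1$, any subsheaf with vanishing generic sections must vanish, and therefore $\cal F\subset (g)/(f)\iso \O_{V(h)}(r-d)$. This yields
\[
\chi(\cal F)\le \chi(\O_{V(h)}(r-d))=r(r-d)+1-\tfrac{(r-1)(r-2)}{2}.
\]
Substituting into $d\chi(\cal F)+rg<0$ and multiplying by $2$ reduces the inequality to $r\bigl(d(r-d)+2\bigr)<0$, i.e.\ $d(d-r)>2$, which is clear for $r\le d-1$ and $d\ge 3$.

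The main obstacle is the envelope step in the case $r<d$: identifying the maximal divisorial subsheaf $(g)/(f)\iso \O_{V(h)}(r-d)$ of $\O_C$ that contains $\cal F$. This rests on the local analysis of submodules of the Artinian principal rings $\O_{C,\eta_i}$ together with the purity of $\O_{V(g)}$, and is slightly subtle when $C$ is reducible or non--reduced. Once this embedding is in hand, the remaining slope inequality is elementary, and the condition $d\ge 3$ is exactly what is needed to get strict (rather than merely non--strict) stability.
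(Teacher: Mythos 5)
Your proof is correct and follows the same overall architecture as the paper's: establish purity of $\mathcal{I}$, split on the multiplicity $r$ of the subsheaf, dispose of $r=d$ by noting the quotient is a nonzero zero-dimensional sheaf, and for $r<d$ embed the subsheaf into the ideal sheaf $\mathcal{I}_{S\subset C}=(g)/(f)\cong\mathcal{O}_{V(h)}(r-d)$ of a subcurve $S=V(g)$ of degree $d-r$ with zero-dimensional quotient, then verify the slope inequality (your computation in terms of $\chi$ is equivalent to the paper's comparison of reduced Hilbert polynomials, and both reduce to the same elementary inequality, which is where $d\ge 3$ enters). The one genuine difference is that the paper outsources the envelope step to Maican's Lemma~6.7 of \cite{MaicanTwoSemiSt}, whereas you prove it from scratch by classifying $\mathcal{F}_{\eta_i}\subset\mathcal{O}_{C,\eta_i}$ at the generic points (Artinian quotients of DVRs) and using purity of $\mathcal{O}_{V(g)}$ to kill the zero-dimensional image of $\mathcal{F}$ in $\mathcal{O}_{V(g)}$; this makes your argument self-contained, at the cost of redoing a known lemma, and your localisation argument does handle the reducible and non-reduced cases correctly. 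A cosmetic point only: you use $g$ both for the polynomial $\prod_i f_i^{k_i}$ and for the genus in $d\chi(\mathcal{F})+rg<0$, which should be disambiguated in a final write-up.
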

\begin{proof}
For a coherent sheaf $\cal F$ on $\P_2$ we denote its Hilbert polynomial and the corresponding reduced Hilbert polynomial by $P_{\cal F}(m)$ and  $p_{\cal F}(m)$ respectively.

Since $\cal O_C$ does not have zero dimensional torsion, the same is true for its subsheaf $\cal I$, i.~e., $\cal I$ is pure-dimensional.

Let $\cal E$ be a proper subsheaf of $\cal I$. Since $\cal E$ is one-dimensional, its Hilbert polynomial is $am+b$. If the multiplicity $a$ equals $d$, then $\cal I/\cal E$ is zero dimensional, hence $P_{\cal I}(m)-P_{\cal E}(m)=h^0(\cal I/\cal E)>0$ and $p_{\cal E}(m)<p_{\cal I}(m)$.

 Assume $a<d$. Then using  \cite[Lemma~6.7]{MaicanTwoSemiSt}, we obtain a curve $S\subset C$ of degree $s<d$ such that its ideal sheaf $\cal I_S\subset \O_C$ contains $\cal E$ and $\cal Q\defeq\cal I_S/\cal E$ is a zero-dimensional sheaf.
Hence the Hilbert polynomial of $\cal E$ is
\begin{align*}
P_{\cal E}(m)=&P_{\cal I_S}(m)-h^0(\cal Q)=P_{\O_C}(m)-P_{\O_S}(m)
-h^0(\cal Q)=\\
&dm+\frac{d(3-d)}{2}-(sm+\frac{s(3-s)}{2})-h^0(\cal Q)
\end{align*}
Therefore,
\[
p_{\cal E}(m)=m+\frac{3}{2}-\frac{d+s}{2}-\frac{h^0(\cal Q)}{d-s}.
\]
Since
\[
p_{\cal I}(m)=m+\frac{(3-d)}{2}-\frac{1}{d},
\]
one sees that $p_{\cal E}(m)<p_{\cal I}(m)$ if and only if $\frac{1}{d}<\frac{s}{2}+\frac{h^0(\cal Q)}{d-s}$ or equivalently $1<\frac{sd}{2}+d\cdot\frac{h^0(\cal Q)}{d-s}$, which is clearly true since $d\ge 3$.

We proved $p_{\cal E}(m)<p_{\cal I}(m)$ for every proper subsheaf $\cal E$ of $\cal I$. Therefore, $\cal I$ is stable.
\end{proof}

\begin{pr}\label{pr: subvariety in Simpson}
1) The sheaves with resolution~\refeq{eq:resolution one point} are stable.

2) The corresponding map
\[
M\ra M_{dm+\frac{d(3-d)}{2}+1}(\P_2), \quad (\point{f}, p)\mto [\cal F],
\]
is a closed embedding of codimension $\frac{d(d-3)}{2}$.

3)\footnote{This observation is due to an unknown referee.} The image of the embedding from 2) coincides with the locus
\[
\{[\cal E]\in M_{dm+\frac{d(3-d)}{2}+1}(\P_2) \mid h^0(\cal E)\neq 0\}
\]
of sheaves with global sections.
\end{pr}
\begin{proof}
1) The isomorphism class of every sheaf $\cal F$ with resolution~\refeq{eq:resolution one point} is represented by a plane projective  curve $C$ of degree $d$ and a point $p$ at $C$. By Lemma~\ref{lemma:resANDext}  $\cal F$ is a non-trivial extension
\[
0\ra \O_C\ra \cal F \ra \k_p\ra 0
\] and can be
obtained as $\cExt^1(\cal I, O_{\P_2})(-d)$, where $\cal I$ is the ideal sheaf of a point $p$ at $C$, i.~e., $\cal I$ is given by the exact sequence
\[
0\ra \cal I\ra \O_C\ra \k_p\ra 0.
\]
Using the notation  $\cal I^D\defeq \cExt^1(\cal I, \omega_{\P_2})$ from~\cite{MaicanDuality} we get $\cal F=\cal I^D(-d+3)$. By Lemma~\ref{lemma:ideal_of_point_on_curve} $\cal I$ is stable. Therefore, by the result from~\cite{MaicanDuality} its dual  $\cal I^D$ is stable as well. Note that by~\cite[Lemma~9.2]{MaicanTwoSemiSt} it can not be properly semi-stable.

This proves the first part of the statement.

2) There is  a family of sheaves, flat over $X$, with Hilbert polynomial $dm+\frac{d(3-d)}{2}+1$ given by the resolution
\[
0\ra 2\O_{X\times\P_2}(-d+1)\xra{\Psi}\O_{X\times\P_2}(-d+2)\oplus \O_{X\times\P_2}\ra \fcal F\ra 0,
\]
where $\Psi|_{\{A\}\times\P_2}=A$. Since $X\xra{\nu}M$ is a $\P G$-bundle over $M$, we get locally over $M$ a flat family of sheaves, which induces the required morphism.

Clearly the morphism is injective with a closed image. It remains to show that it is a closed embedding. In other words we need to consider its image equipped with the induced structure and show that the inverse map is a morphism. This can be done using the method from~\cite[6.5]{Maican6m+1}.
Namely,
given a point in $M_{dm+\frac{d(3-d)}{2}+1}(\P_2)$ represented by a sheaf $\cal F$ with resolution \refeq{eq:resolution one point}, it is enough to construct a point
of the universal curve of degree $d$ from the Beilinson spectral sequence converging to $\cal F$ (cf.~\cite[3.1.4.~Theorem~II, page~245]{Okonek-Schneider-Spindler} and also~\cite{BeilinsonEng}) by means of algebraic operations.

Since $\cal F$ is a one-dimensional sheaf, the only non-trivial part of the first sheet of the Beilinson spectral sequence
\[
E_1^{p,q}(\cal F)=H^q(\P_2, \cal F\ten \Omega^{-p}(-p))\ten O(p)
\] is a $2\times 3$  rectangular

\[
\begin{xy}
(0,0)*+{E_1^{0,0}.}="00";
(-20,0)*+{E_1^{-1,0}}="-10";
(-40,0)*+{E_1^{-2,0}}="-20";
(0,15)*+{E_1^{0,1}}="01";
(-20,15)*+{E_1^{-1,1}}="-11";
(-40,15)*+{E_1^{-2,1}}="-21";
{\ar@{->}"-20";"-10"};
{\ar@{->}"-10";"00"};
{\ar@{->}"-21";"-11"};
{\ar@{->}"-11";"01"};
\end{xy}
\]
Analyzing this spectral sequence as in~\cite[2.2]{DrezetMaican4m}, basically repeating the proof of~\cite[Proposition~2]{MaicanHilb}, and taking into account the stability of $\cal F$ one can conclude that  $\cal F$ is a non-trivial extension
\[
0\ra \O_C\ra \cal F \ra \k_p\ra 0,
\]
where $(C, p)\in M$ and the sheaves $\O_C$, $\k_p$ can be computed in terms of cokernels of the maps involved in the Beilinson spectral sequence.

3) Clearly, every sheaf with resolution~\refeq{eq:resolution one point} has a non-trivial section.

Let now $[\cal E]$ be a point in $M_{dm+\frac{d(3-d)}{2}+1}(\P_2)$ with a non-trivial section $\O_{\P_2}\xra{s}\cal E$. The kernel of $s$ is an ideal sheaf of a subscheme $Y$ in $\P_2$, hence we  obtain an injection $\O_Y\ra \cal E$ and conclude that $\O_Y$ is pure-dimensional as a  subsheaf of a pure-dimensional sheaf $\cal E$. Therefore, $Y$ is a curve. Let $a=\deg Y$, then the Hilbert polynomial of $\O_Y$ is $am+\frac{a(3-a)}{2}$. By the semi-stability of $\cal E$ we get
\(
m+\frac{3-a}{2}\le m+\frac{3-d}{2}+\frac{1}{d}
\)
and therefore
\(
a\ge d-\frac{2}{d}.
\)
Since $d\ge 3$, this means $a> d-1$ and thus $a=d$. So $Y$ is a curve of degree $d$ and the Hilbert polynomial of  the quotient sheaf $\cal E/\O_Y$ equals $1$. Therefore, the quotient is   isomorphic to a skyscraper sheaf $\k_p$ for some point $p\in Y$. We obtained an extension
\[
0\ra \O_Y\ra \cal E\ra \k_p\ra 0,
\]
which is non-trivial by the semi-stability of $\cal E$.
\end{proof}

\section{Universal singular locus as the subvariety of singular sheaves}\label{section: singular locus}
Let $X'$  be the subvariety of matrices in $X$ defining singular sheaves, i.e., sheaves that are not locally free on their support.

A matrix $A\in X$ as in ~\refeq{eq:resolution one point} defines a singular sheaf if and only if it vanishes at some point $q$ of $\P_2$. Since the linear forms $z_1$ and $z_2$ are linear independent, this point could only be the common zero point of $z_1$ and $z_2$. If $z_1=a_0x_0+a_1x_1+a_2x_2$ and $z_2=b_0x_0+b_1x_1+b_2x_2$, then $q=\point{d_0, d_1, d_2}$, where $d_i$ are the minors of the matrix $\smat{a_0&a_1&a_2\\b_0&b_1&b_2}$. Hence $X'$ is given as a closed subvariety in $X$ by the equations
\begin{equation}\label{eq:equations X'}
f_1=q_1(d_0, d_1, d_2)=0,\quad f_2=q_2(d_0, d_1, d_2)=0.
\end{equation}
After computing the partial derivatives of $f_1$ and $f_2$ and taking into account that the minors $d_0$, $d_1$, and $d_2$ do not vanish simultaneously since $z_1$ and $z_2$ are always linear independent, we conclude that $X'$ is a smooth subvariety of codimension $2$ in $X$.

\begin{lemma}
A point $(C, p)$ from $M$ corresponds to a singular sheaf if and only if $p$ is a singular point of $C$, i.e., the subvariety $M'$ of singular sheaves coincides with the universal singular locus $\{(C, p)\ |\ p\in\Sing(C)\}$.
\end{lemma}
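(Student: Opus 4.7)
The plan is to use the explicit description of $X'$ already derived. From the discussion preceding the lemma, a sheaf $\cal F$ with resolution \refeq{eq:resolution one point} defined by $A = \smat{z_1 & q_1 \\ z_2 & q_2}$ fails to be locally free exactly when $A$ drops rank, which happens only at the common zero $p = z_1 \wedge z_2$, and precisely when $q_1(p) = q_2(p) = 0$. Under the map $\nu$ this sheaf corresponds to the pair $(C, p)$ with $C = \{z_1 q_2 - z_2 q_1 = 0\}$. So the lemma reduces to the purely algebraic claim:
\[
p \in \Sing(C) \iff q_1(p) = q_2(p) = 0,
\]
given that $z_1(p) = z_2(p) = 0$ and $z_1, z_2$ are linearly independent.

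For the ``if'' direction, suppose $q_1(p) = q_2(p) = 0$. Writing $f = z_1 q_2 - z_2 q_1$ and applying the Leibniz rule,
\[
\dd{f}{x_i}(p) = \dd{z_1}{x_i}(p)\, q_2(p) + z_1(p)\,\dd{q_2}{x_i}(p) - \dd{z_2}{x_i}(p)\, q_1(p) - z_2(p)\,\dd{q_1}{x_i}(p) = 0
\]
for each $i$, since each summand has a factor vanishing at $p$. Together with $f(p) = 0$ this shows $p \in \Sing(C)$.

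For the converse, assume $p \in \Sing(C)$. With $z_1 = \sum a_i x_i$ and $z_2 = \sum b_i x_i$, the same computation at $p$ (where $z_1(p) = z_2(p) = 0$) gives
\[
\dd{f}{x_i}(p) = a_i\, q_2(p) - b_i\, q_1(p) = 0, \qquad i = 0,1,2.
\]
This says that the vector $(q_2(p), -q_1(p)) \in \k^2$ lies in the left kernel of the $2 \times 3$ matrix $\smat{a_0 & a_1 & a_2\\ b_0 & b_1 & b_2}$. Linear independence of $z_1, z_2$ means this matrix has rank $2$, hence trivial left kernel, forcing $q_1(p) = q_2(p) = 0$.

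Combining the two directions gives the characterization of $X'$ as exactly the matrices whose associated pair $(C,p)$ satisfies $p\in\Sing(C)$, so $M' = \nu(X') = \{(C,p)\mid p\in\Sing(C)\}$ is the universal singular locus, as claimed. The only thing requiring mild care is the converse direction, where one must use linear independence of $z_1, z_2$ to conclude that the simultaneous vanishing of all three expressions $a_i q_2(p) - b_i q_1(p)$ forces $q_1(p) = q_2(p) = 0$; but this is immediate from a rank argument and is not really an obstacle.
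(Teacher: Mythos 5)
Your proposal is correct and follows essentially the same route as the paper: reduce to the algebraic statement that, for $f=z_1q_2-z_2q_1$ with $z_1(p)=z_2(p)=0$, the vanishing of all $\dd{f}{x_i}(p)$ is equivalent to $q_1(p)=q_2(p)=0$, using the Leibniz rule in one direction and the linear independence of $z_1,z_2$ (your rank argument on the coefficient matrix) in the other. The only difference is that you spell out the trivial-left-kernel step that the paper leaves implicit.
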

\begin{proof}
Let $(C, p)$ be a point in $M$. Then there is a matrix $A=\smat{z_1&q_1\\z_2&q_2}\in X$ such that $C$ is the zero set of $f=\det A$ and $p$ is the common  zero set of $z_1$ and $z_2$.

Suppose $(C, p)$ corresponds to a singular sheaf.  Then $q_1(p)=q_2(p)=0$ and one checks that $\dd{f}{x_i}(p)=0$ for all $i=0,1,2$.

If $p$ is a singular point of $C$, then all partial derivatives $\dd{f}{x_i}(p)$ vanish. Since $\dd{f}{x_i}(p)=(\dd{z_1}{x_i}q_2-\dd{z_2}{x_i}q_1)(p)$ and since $z_1$ and $z_2$ are linear independent, one concludes that $q_1(p)=q_2(p)=0$, hence $(C, p)$ defines a singular sheaf.
\end{proof}

Since $X$ is a principal bundle over $M$ and since $X'$ is smooth, one concludes that $M'$ is smooth as well. One can also show this directly. The codimension of $M'$ in $M$ is $2$.

Let $M_B=M\setminus M'$, its points are isomorphism classes of vector bundles (on support). Then one could consider $M$ as a compactification of $M_B$ by coherent sheaves.

\section{Blow up $\Bl_{M'}(M)$ as a compactification of $M_B$ by vector bundles}
\label{section: blow up}

For a fixed point $(C, p)\in M'$ representing an isomorphism class $[\cal F]$ of a singular sheaf and for a fixed tangent vector $v\in T_{[\cal F]}M\setminus T_{[\cal F]}M'$, i.~e., $v$ is normal to $M'$, we are going to construct a $1$-dimensional sheaf  on the surface $D(p)$, locally free on its support. We call such sheaves $R$-bundles.
They are flat degenerations of the non-singular sheaves represented by the points of $M\setminus M'$.
We are going to show that $\P (T_{[\cal F]} M/T_{\cal F}M')$ is naturally the space of equivalence classes of $R$-bundles.
We shall use the parameter space $X$.

Let $A\in X'$ and $B\in T_A X\setminus T_A X'$ represent a singular sheaf $[\cal F]=(C, p)\in M'$ and a tangent vector at $[\cal F]$ normal to $M'$ respectively.

Since $X$ is an open subset of the affine variety $\bb A=\Hom(2\O_{\P_2}(-d+1), \O_{\P_2}(-d+2)\oplus \O_{\P_2})$,
 we can identify $T_A X$ with $\bb A$. Let $T\subset \k$ be the preimage of $X$ under the morphism  $\k\ra \bb A$, $t\mto A+tB$.

Then the cokernel  $\fcal F$ of the injective morphism
\[
2\O_{T\times\P_2}(-d+1)\xra{A+tB}\O_{T\times\P_2}(-d+2)\oplus \O_{T\times\P_2},
\]
is a flat family of sheaves in $M$ (considered as a subvariety in the corresponding moduli space by Proposition~\ref{pr: subvariety in Simpson}) such that
the restriction of $\fcal F$ to the fibre $\{0\}\times \P_2$ is isomorphic to $\cal F$. Since $B$ does not belong to $T_A X'$,
the restrictions $\fcal F_t$ to the fibres $\{t\}\times \P_2$ are non-singular sheaves for $t\neq 0$ in a neighbourhood of zero. Shrinking $T$ if necessary, we can assume that $\fcal F_t$ are non-singular sheaves for all $t\in T$, $t\neq 0$.

Let $Z\xra{\sigma} T\times \P_2$ be the blowing up $Z=\Bl_{0\times p} (T\times \P_2)$. Let $D_1=D_1(p)$ be its exceptional divisor and let $s$ be the canonical section of $\O_Z(D_1)$.
Let $x_i$ denote the homogeneous coordinates
of $\P_2$, such that the point $0\times p$ has the equations $tx_0, x_1, x_2$. Then
$Z$ is embedded in $T\times\P_2\times\P_2$ with equations
\[
tx_0u_1-x_1u_0,\quad tx_0u_2-x_2u_0,\quad x_1u_2-x_2u_1,
\]
where the $u_i$ are the coordinates of the second  $\P_2$. Note that $s$ is locally given by $\frac{tx_0}{u_0}$, $\frac{x_1}{u_1}$, or $\frac{x_2}{u_2}$. This implies $\O_Z(D_1)\iso \O_Z(1, -1)$. Moreover,
\begin{equation}\label{eq:factoriz}
tx_0=su_0,\quad x_1=su_1,\quad x_2=su_2
\end{equation}
if one considers  $x_i$ as global sections of $\O_Z(1, 0)$ and $u_i$ as global sections of $\O_Z(0, 1)$.

Note that the morphism $Z\xra{\sigma} T\times \P_2\xra{pr_1} T$ is flat.
Indeed,
since both $Z$ and $T$ are regular, $\dim Z=3$, $\dim T=1$, and $\dim
Z_t=2=\dim Z-\dim T$ for all $t\in T$, this follows
from~\cite[6.1.5.]{EGA-IV-II}. Notice that the fibres over $t\neq 0$ are isomorphic to $\P_2$ and $Z_0$ is isomorphic to $D(p)$.

By the construction  the pullback $\sigma^*(A+tB)$ vanishes at $D_1$ and hence can be factored as
\begin{equation}\label{eq:factorizRES}
2O_Z(-d+1, 0)\xra{\smat{s&0\\0&s}} 2O_Z(-d+2, -1)  \xra{\phi(A, B)}O_Z(-d+2, 0)\oplus \O_Z.
\end{equation}
Let $\fcal E$ be the cokernel of $\phi(A, B)$.

Since $Z$ is isomorphic to $T\times \P_2$  outside of the fibre $Z_0$, the sheaf $\fcal E$ can be seen as a family of non-singular sheaves in $M$ parameterized by $T\setminus \{0\}$. Hence the sheaf $\cal E=\cal E(A, B)=\fcal E_0$ on $D(p)=Z_0$ is a degeneration of non-singular sheaves. Let $\Phi(A, B)$ denote the restriction of $\phi(A,B)$ to $Z_0$, we obtain a locally free resolution of $\cal E(A, B)$
\[
2\O_{D(p)}(-d+2, -1)\xra{\Phi(A, B)}  \O_{D(p)}(-d+2, 0)\oplus \O_{D(p)}\ra \cal E(A, B)\ra 0.
\]
Assume without loss of generality that
\begin{equation}\label{eq:A special}
\quad p=\point{1, 0, 0},\quad
A=\pmat{x_1&q_1\\x_2&q_2}\in X'.
\end{equation}
We can write $A$ as
\[
\pmat{
x_1&A_{10}x_0^{d-2}x_1+A_{01}x_0^{d-2}x_2+x_1^2 P_1(x_0, x_1)+ x_1x_2Q_1(x_0, x_1, x_2)+x_2^2R_1(x_0,x_2) \\
x_2&B_{10}x_0^{d-2}x_1+B_{01}x_0^{d-2}x_2+x_1^2 P_2(x_0, x_1)+ x_1x_2Q_2(x_0, x_1, x_2)+x_2^2R_2(x_0,x_2)
},
\]
where $A_{10}, A_{01}, B_{10}, B_{01}\in \k$, $P_i\in \k[x_0, x_1]$, $Q_i\in \k[x_0, x_1, x_2]$, $R_i\in \k[x_0, x_2]$, $i=1,2$.
 Straightforward calculations using~\refeq{eq:equations X'} show  that the tangent equation at $A$ in this case
are
\begin{equation}\label{eq:tangent equations}
\begin{cases}
\xi_{00}= A_{10} \xi_0+ A_{01}\eta_0\\
\eta_{00}= B_{10} \xi_0 + B_{01}\eta_0
\end{cases},
\end{equation}
where
\[
B= \pmat{
\xi_0x_0+\xi_1x_1+\xi_2x_2   &\xi_{00}x_0^{d-1}+\dots+\xi_{0d-1}x_2^{d-1} \\
\eta_0x_0+\eta_1x_1+\eta_2x_2& \eta_{00}x_0^{d-1}+\dots+\eta_{0d-1}x_2^{d-1}
}
\]
is a tangent vector at $A$.

Then using~\refeq{eq:factoriz} one computes
\[
\Phi(A, B)=\pmat{
u_1&u_1A_{10}x_0^{d-2}+u_2A_{01}x_0^{d-2}+u_1x_1P_1+u_1x_2Q_1+u_2x_2R_1
\\
u_2&u_1B_{10}x_0^{d-2}+u_2B_{01}x_0^{d-2}+u_1x_1P_2+u_1x_2Q_2+u_2x_2R_2
}+
\pmat{\xi_0&\xi_{00}x_0^{d-2}\\\eta_0&\eta_{00}x_0^{d-2}}u_0.
\]
Note that $u_1x_2=u_2x_1$, hence there is no asymmetry in the formula.
The cokernel of such a matrix is not a locally free sheaf on its support (defined by the determinant of the matrix) if and only if all entries of the matrix vanish at some point. Since by the construction this can only happen on $D_1(p)$, one sees that this condition  is equivalent to
\[
\xi_{00}= A_{10} \xi_0+ A_{01}\eta_0, \quad \eta_{00}= B_{10} \xi_0 + B_{01}\eta_0.
\]
The latter are just the tangent equations of $X'$ at $A$.

So, for every $A\in X'$ and for every $B\in T_AX\setminus T_AX'$ we obtain a sheaf $\cal E=\cal E(A, B)$ on $D(p)$ locally free on its support. Here $p$ is the common zero of $z_1$ and $z_2$. We will call such sheaves \emph{$R$-bundles}.
By the following lemma $R$-bundles are flat limits of non-singular sheaves parameterized by the points of $M\setminus M'$.

\begin{lemma}
$\Phi(A, B)$ is injective for $B\in T_AX\setminus T_AX'$, hence $\fcal E$ is flat over $T$ and  $\cal E(A, B)$ is
given by the locally free resolution
\begin{equation}\label{eq:Phi(A, B)}
0\ra 2\O_{D(p)}(-d+2, -1)\xra{\Phi(A, B)}  \O_{D(p)}(-d+2, 0)\oplus \O_{D(p)}\ra \cal E(A, B)\ra 0.
\end{equation}
\end{lemma}
\begin{proof} Since the middle term of the exact sequence
\[
0\ra 2O_Z(-d+2, -1)  \xra{\phi(A, B)}O_Z(-d+2, 0)\oplus \O_Z\ra \fcal E \ra 0.
\]
is flat over $T$, one concludes that $\fcal E$ is flat if and only if for every point $t\in T$ the restriction of the exact sequence to the fibres $Z_t$ remains exact. Since this  is clearly the case over $t\neq 0$, the flatness of $\fcal E$ follows from the injectivity of $\Phi(A, B)$.

Notice that it is enough to show that the restrictions of $\Phi(A, B)$ to its components $D_0(p)$ and $D_1(p)$ are injective. In this case the kernel of $\Phi(A, B)$  can only be  supported  on  $L=D_0(p)\cap D_1(p)$. Hence it should be zero because locally free sheaves have no torsion.

 Since $A$ is injective, we immediately conclude that the kernel of $\Phi(A, B)|_{D_0(p)}$ can only be supported on $L$. On the other hand, locally free sheaves on $D_0(p)$ are torsion free, hence $\Phi(A, B)|_{D_0(p)}$ is injective.

Under assumptions of \refeq{eq:A special} the restriction of $\Phi(A, B)$ to $D_1(p)\iso \P_2$ is given by the matrix
\[
\pmat{
u_1+\xi_0 u_0 &u_1A_{10}+u_2A_{01}+\xi_{00}u_0
\\
u_2+\eta_0 u_0&u_1B_{10}+u_2B_{01}+\eta_{00}u_0
}.
\]
Vanishing of its determinant implies $B\in T_A X'$, which is a contradiction. Therefore, the restriction of  $\Phi(A, B)$ to $D_1(p)$ is injective.
\end{proof}
As the following lemma shows,
the $R$-bundles are not only new flat limits of non-singular sheaves but they preserve the information about the singular sheaves as well.
\begin{lemma}
$\sigma_*\fcal E \iso \fcal F$.
\end{lemma}
\begin{proof}
Notice that by the construction of $\fcal E$ there is an exact sequence
\[
0\ra \cal C\ra \sigma^*\fcal F\ra\fcal E\ra 0,
\]
where $\cal C=2\O_{D_1}(-d+2, -1)\iso 2\O_{D_1}(0, -1)\iso 2\O_{D_1}(-L)$ is the cokernel of the morphism
$\smat{s&0\\0&s}$ from~\refeq{eq:factorizRES}. Since $R^i\sigma_* \cal C=0$ for all $i\ge 0$, one concludes $\sigma_*\sigma^*\fcal F\iso\sigma_*\fcal E$. On the other hand, from the properties of blow-ups it follows that $\sigma_*\O_Z\iso \O_{T\times \P_2}$ and $R^i\sigma_*\O_Z=0$ for $i>0$, which implies that $\sigma_*\sigma^* \cal B\iso \cal B$ and $R^i\sigma_*\sigma^* \cal B=0$, $i>0$, for every locally free sheaf $\cal B$ on $T\times \P_2$.
Applying the functor $\sigma_*\sigma^*$ to the locally free resolution of $\fcal F$,
one gets $\sigma_*\sigma^*\fcal F\iso \fcal F$ and hence the required statement.
\end{proof}

\begin{lemma}\label{lemma:lift R-bundles}
Every morphism of two sheaves with resolution of the type~\refeq{eq:Phi(A, B)} can be uniquely lifted to a morphism of resolutions.
\end{lemma}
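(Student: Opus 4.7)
My plan is to imitate the proof of the analogous lifting lemma stated earlier for resolutions of type \refeq{eq:resolution one point} on $\P_2$. Given a morphism $\varphi\colon \cal E(A_1,B_1)\to \cal E(A_2,B_2)$, the standard diagram chase applied to the two resolutions shows that a chain-map lift of $\varphi$ exists and is unique provided both
\[
\Hom\bigl(\O_{D(p)}(-d+2,0)\oplus\O_{D(p)},\,2\O_{D(p)}(-d+2,-1)\bigr)=0
\]
and
\[
\Ext^1\bigl(\O_{D(p)}(-d+2,0)\oplus\O_{D(p)},\,2\O_{D(p)}(-d+2,-1)\bigr)=0.
\]
Expanding the direct sums, the task reduces to the vanishing of $H^i(D(p),\O_{D(p)}(0,-1))$ and $H^i(D(p),\O_{D(p)}(-d+2,-1))$ for $i=0,1$.

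For these vanishings I would work on the reducible surface $D(p)=D_0(p)\cup D_1(p)$ via the Mayer--Vietoris short exact sequence
\[
0\to \O_{D(p)}(a,b)\to \O_{D_0(p)}(a,b)\oplus\O_{D_1(p)}(a,b)\to \O_{L(p)}(a,b)\to 0
\]
associated to the decomposition with intersection the exceptional line $L(p)\cong\P_1$. On $D_1(p)\cong\P_2$ the restriction is the usual $\O_{\P_2}(b)$; on the blow up $D_0(p)$ it is the pull back of $\O_{\P_2}(a)$ twisted by $b$ times the exceptional divisor; on $L(p)\cong\P_1$ it is a suitable twist of $\O_{\P_1}$. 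For both required bidegrees $(0,-1)$ and $(-d+2,-1)$ the three component cohomologies vanish in degrees $0$ and $1$ by the classical cohomology of line bundles on $\P_2$, on $\P_1$, and on a one-point blow up of $\P_2$ (using $d\geq 3$), and the long exact sequence then delivers the required vanishing on $D(p)$. An entirely analogous computation for the special case $d=3$ is carried out in \cite{IenaTrmPaper1}, which can be followed verbatim.

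The main obstacle is the careful identification of the restrictions of $\O_{D(p)}(a,b)$ to the two components and to $L(p)$, together with the bookkeeping in the Mayer--Vietoris long exact sequence; once this is in place, the cohomological vanishing is immediate, and existence plus uniqueness of the lift follow word for word from the argument given earlier for $\P_2$-resolutions.
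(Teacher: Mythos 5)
Your overall route is the same as the paper's: reduce the lifting statement to the vanishing of $\Hom$ and $\Ext^1$ from $\O_{D(p)}(-d+2,0)\oplus\O_{D(p)}$ into $2\O_{D(p)}(-d+2,-1)$, hence to $H^i(D(p),\O_{D(p)}(0,-1))=H^i(D(p),\O_{D(p)}(-d+2,-1))=0$ for $i=0,1$, and establish these via the gluing sequence for $D(p)=D_0(p)\cup D_1(p)$ along $L(p)$. The restrictions to $D_1(p)\iso\P_2$ and to $L(p)\iso\P_1$ are $\O_{\P_2}(-1)$ and $\O_{\P_1}(-1)$, as you say. The one place where you diverge is the component $D_0(p)$: the paper avoids blow-up bookkeeping altogether by realizing $D_0(p)$ as a divisor of bidegree $(1,1)$ in $\P_2\times\P_1$ and computing via the restriction sequence $0\ra\O_{\P_2\times\P_1}(a-1,b-1)\ra\O_{\P_2\times\P_1}(a,b)\ra\O_{D_0(p)}(a,b)\ra 0$ and K\"unneth, whereas you propose to work with the intrinsic description of $D_0(p)$ as a blow up.

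That intrinsic computation is fine in principle, but your identification of the restriction is wrong, and with it the key vanishing fails. The second factor of $\P_2\times\P_2$ restricted to $D_0(p)$ is not the blow-down $\pi$ but the resolved projection from $p$ onto the line $\{u_0=0\}\iso\P_1$; its fibres are the strict transforms of lines through $p$, so the pullback of $\O_{\P_1}(1)$ is $\pi^*\O_{\P_2}(1)\ten\O_{D_0(p)}(-L)$, where $L$ denotes the exceptional divisor. Hence $\O_{D(p)}(a,b)|_{D_0(p)}\iso\pi^*\O_{\P_2}(a+b)\ten\O_{D_0(p)}(-bL)$, not $\pi^*\O_{\P_2}(a)\ten\O_{D_0(p)}(bL)$. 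The difference is not cosmetic: for $(a,b)=(-d+2,-1)$ your formula gives $\pi^*\O_{\P_2}(-d+2)\ten\O_{D_0(p)}(-L)$, and the sequence $0\ra\pi^*\O_{\P_2}(-d+2)(-L)\ra\pi^*\O_{\P_2}(-d+2)\ra\O_{L}\ra 0$ together with $H^0(\P_2,\O_{\P_2}(-d+2))=H^1(\P_2,\O_{\P_2}(-d+2))=0$ and $H^0(\O_L)=\k$ forces $H^1$ of that sheaf to be $\k\neq 0$, contradicting the vanishing you assert. With the correct identification one gets $\pi^*\O_{\P_2}(-d+1)\ten\O_{D_0(p)}(L)$, whose cohomology equals $H^i(\P_2,\O_{\P_2}(-d+1))$ because $\pi_*\O_{D_0(p)}(L)=\O_{\P_2}$ and $R^1\pi_*\O_{D_0(p)}(L)=0$; this does vanish in degrees $0$ and $1$. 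Once this identification is corrected (or replaced by the paper's $\P_2\times\P_1$ computation), the Mayer--Vietoris bookkeeping and the existence and uniqueness of the lift go through exactly as you outline.
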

\begin{proof}
Follows from
$\Ext^1(\O_{D(p)}(-d+2, 0)\oplus\O_{D(p)}, 2\O_{D(p)}(-d+2, -1))=
\Hom(\O_{D(p)}(-d+2, 0)\oplus\O_{D(p)}, 2\O_{D(p)}(-d+2, -1))=0$.

We are going to prove that the groups
$H^0(D(p), \O_{D(p)}(-d+2, -1))$, $H^1(D(p), \O_{D(p)}(-d+2, -1))$, $H^0(D(p), \O_{D(p)}(0, -1))$, and $H^1(D(p), \O_{D(p)}(0, -1))$ are zero.

Let us compute the cohomology groups of the sheaf $\O_{D(p)}(-d+2, -1)$.
Consider the gluing exact sequence
\begin{equation*}
0\ra \O_{D(p)}(-d+2, -1)\ra \O_{D_0(p)}(-d+2, -1)\oplus \O_{D_1(p)}(-1)\ra \O_L(-1)\ra 0.
\end{equation*}
Since all the cohomology groups  of $\O_{D_1}(-L)$ and $\O_L(-1)$ are zero, using the long exact cohomology sequence we conclude that $H^i(D(p), \O_{D(p)}(-d+2, -1))\iso H^i(D_0(p), \O_{D_0(p)}(-d+2, -1))$.
From the exact sequence
\begin{equation*}
0\ra \O_{\P_2\times \P_1}(-d+1,-2)\ra \O_{\P_2\times \P_1}(-d+2, -1) \ra \O_{D_0(p)}(-d+2, -1)\ra 0
\end{equation*}
and the corresponding long exact cohomology sequence using that
\[
H^0(\P_2\times \P_1,\O_{\P_2\times \P_1}(-d+2, -1))=0, \quad H^1(\P_2\times \P_1,\O_{\P_2\times \P_1}(-d+1, -2))=0
\]
we conclude that $H^0(D_0(p), \O_{D_0(p)}(-d+2, -1))=0$.
Using that
\[
H^1(\P_2\times \P_1,\O_{\P_2\times \P_1}(-d+2, -1))=0, \quad H^2(\P_2\times \P_1,\O_{\P_2\times \P_1}(-d+1, -2))=0,
\]
we conclude that $H^1(D_0(p), \O_{D_0(p)}(-d+2, -1))=0$.

Analogously one computes that the cohomology groups of $\O_{D(p)}(0, -1)$ are zero as well.
\end{proof}
\begin{rem}
Note that the uniqueness of the lifting implies that the lifting of an isomorphism of $R$-bundles is an isomorphism in each degree.
\end{rem}

\begin{df}\label{df:equivalence R bundles}
Let $\cal E_1=\cal E(A, B_1)$ and $\cal E_2=\cal E(A, B_2)$ be two $R$-bundles on $D(p)$.
 We call them equivalent if there exists an automorphism $\phi$ of
$D(p)$ that acts identically on $D_0(p)$  and such that
$\phi^*(\cal E_1)\iso \cal E_2$.
\end{df}

\begin{pr}
Two $R$-bundles $\cal E_1=\cal E(A, B_1)$ and $\cal E_2=\cal E(A, B_2)$ are equivalent if and only if $B_1$ and $B_2$ represent the same point in $\P N_A$, where $N=T_AX/T_AX'$.
\end{pr}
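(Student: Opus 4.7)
The plan is to work out explicitly the group of automorphisms of $D(p)$ acting trivially on $D_0(p)$, to transport the parameter $B$ through such an automorphism, and to match the induced equivalence on $B$ with $\P N_A$. An automorphism $\phi$ fixing $D_0(p)$ pointwise must preserve the component decomposition, so it restricts to an automorphism of $D_1(p) \cong \P_2$ fixing $L(p)$ pointwise. Choosing coordinates with $L(p) = \{u_0 = 0\}$, a short linear-algebra calculation identifies this group with the three-dimensional family
\[
\psi_{a, b, c} \colon (u_0, u_1, u_2) \mapsto (a u_0,\, b u_0 + u_1,\, c u_0 + u_2), \quad a \in \k^*,\; b, c \in \k,
\]
each of which extends to an automorphism of $D(p) \subset \P_2 \times \P_2$ because the defining ideal generated by $u_0 x_1$, $u_0 x_2$, $u_1 x_2 - u_2 x_1$ is preserved.

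Next I would compute $\psi_{a, b, c}^* \Phi(A, B)$ on $D(p)$. The key simplification is that, thanks to $u_0 x_1 = u_0 x_2 = 0$ on $D(p)$, one has $u_0 \cdot f(x) = f(p)\, x_0^{d-2} u_0$ for every homogeneous form $f$ of degree $d-2$ in $x_0, x_1, x_2$. Applying this to absorb the cross terms produced by $u_1 \mapsto b u_0 + u_1$ and $u_2 \mapsto c u_0 + u_2$, I expect to obtain $\psi_{a, b, c}^* \Phi(A, B) = \Phi(A, B')$ with
\[
\xi_0' = a \xi_0 + b, \quad \eta_0' = a \eta_0 + c, \quad \xi_{00}' = a \xi_{00} + b A_{10} + c A_{01}, \quad \eta_{00}' = a \eta_{00} + b B_{10} + c B_{01}.
\]
A direct check then shows that the normal coordinates $\mu_1 = \xi_{00} - A_{10} \xi_0 - A_{01} \eta_0$ and $\mu_2 = \eta_{00} - B_{10} \xi_0 - B_{01} \eta_0$, which by~\refeq{eq:tangent equations} identify $N_A = T_A X / T_A X'$, transform by $(\mu_1', \mu_2') = a\,(\mu_1, \mu_2)$, so $\psi_{a, b, c}^*$ preserves the class $[\mu_1 : \mu_2] \in \P N_A$.

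The forward direction is then immediate: if $(\mu_{1, 2}, \mu_{2, 2}) = \lambda\, (\mu_{1, 1}, \mu_{2, 1})$, the choice $a = \lambda$, $b = \xi_{0, 2} - \lambda \xi_{0, 1}$, $c = \eta_{0, 2} - \lambda \eta_{0, 1}$ makes $B_1'$ match $B_2$ on all four relevant components, and hence $\Phi(A, B_1') = \Phi(A, B_2)$. For the converse, Lemma~\ref{lemma:lift R-bundles} lifts any isomorphism $\psi_{a, b, c}^* \cal E(A, B_1) \cong \cal E(A, B_2)$ uniquely to an equality $\Phi(A, B_2) = g^{-1}\, \Phi(A, B_1')\, h$ with $g \in \GL_2(\k)$ and $h$ upper-triangular with scalar diagonal entries and $(1, 2)$-entry a form of degree $d-2$ in $x_0, x_1, x_2$. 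Comparing the coefficients of the monomials $u_0, u_1, u_2$ modulo the defining ideal of $D(p)$ will force both $g$ and $h$ to be scalar multiples of the identity, so $B_1'$ coincides with $B_2$ on the four relevant components, and the scaling formula above yields $(\mu_{1, 2}, \mu_{2, 2}) = a\,(\mu_{1, 1}, \mu_{2, 1})$. The main technical point is precisely this last comparison: the $(1, 2)$-entry of $h$ is killed by the nontriviality (modulo the ideal of $D(p)$) of the degree $d-2$ forms multiplying $u_1$ and $u_2$ in $\Phi(A, 0)$, so the isomorphism of resolutions on its own cannot absorb the freedom to shift $\xi_0, \eta_0$; that freedom has to come from the larger three-dimensional automorphism group of $D(p)$, which is why identifying it at the outset is essential.
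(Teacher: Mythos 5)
Your overall strategy is the same as the paper's: identify the automorphisms of $D(p)$ fixing $D_0(p)$ pointwise as the three-parameter family acting on $D_1(p)\iso\P_2$ by $(u_0,u_1,u_2)\mto(au_0,bu_0+u_1,cu_0+u_2)$, pull back $\Phi(A,B)$ through such an automorphism using $u_0x_1=u_0x_2=0$, and use Lemma~\ref{lemma:lift R-bundles} to reduce an isomorphism of $R$-bundles to an identity of matrices. Your transformation formulas $\xi_0'=a\xi_0+b$, $\xi_{00}'=a\xi_{00}+bA_{10}+cA_{01}$ (etc.) agree with the paper's direct computation of $\phi^*(\Phi_1)$, and your observation that the normal coordinates $\mu_i$ scale by $a$ is exactly why the forward direction works; your choice $a=\lambda$, $b=\xi_{0,2}-\lambda\xi_{0,1}$, $c=\eta_{0,2}-\lambda\eta_{0,1}$ is literally the paper's $\alpha,\beta,\gamma$. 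Your organization of the converse (first normalize by the automorphism, then analyze the residual isomorphism of resolutions on the fixed surface $D(p)$) is a slightly cleaner decomposition than the paper's, which treats $\phi$ and the lift $\smat{a&b\\c&d}$, $\smat{\bar a&\bar b\\0&\bar d}$ simultaneously.

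There is, however, one over-claim in the converse, precisely at what you call the main technical point: the comparison of coefficients does \emph{not} force $h=\smat{\bar a&\bar b\\0&\bar d}$ to be a scalar matrix, hence does not force $B_1'=B_2$ on the four relevant components. The relations one actually extracts (restricting the $(1,2)$- and $(2,2)$-entries to $D_1(p)$) are $\bar aA_{10}=\bar b_{00}+\bar dA_{10}$, $\bar aA_{01}=\bar dA_{01}$, $\bar aB_{10}=\bar dB_{10}$, $\bar aB_{01}=\bar b_{00}+\bar dB_{01}$; when, say, $A_{01}=B_{10}=0$ and $A_{10}=B_{01}$, these leave $\bar a\neq\bar d$ and $\bar b_{00}\neq 0$ possible, and the restriction to $D_0(p)$ (where $u_0\equiv 0$) does not always rule this out either. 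So the lemma ``$h$ scalar, hence $B_1'=B_2$'' is false for special $A\in X'$. The gap is repairable without changing your route: while $g$ is indeed forced to be $\bar a\cdot\id$ and hence $\xi_0'=\mu_0$, $\eta_0'=\nu_0$, the remaining relations combine to give $\bar a\bigl(\xi_{00}'-A_{10}\mu_0-A_{01}\nu_0\bigr)=\bar d\bigl(\mu_{00}-A_{10}\mu_0-A_{01}\nu_0\bigr)$ and likewise for the second row, i.e., the normal vectors of $B_1'$ and $B_2$ agree only up to the factor $\bar d/\bar a$ — which is all one needs, since the conclusion is an equality in $\P N_A$, not in $N_A$. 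This is exactly the bookkeeping the paper carries out via equations \refeq{eq:1.1}--\refeq{eq:2.2}; with that correction your argument is complete.
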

\begin{proof}
{``$\Rightarrow$''.}
Let $\cal E_1=\cal E(A, B_1)$ and $\cal E_2=\cal E(A, B_2)$ be two equivalent
$R$-bundles, then the sheaves $\cal E_1$ and $\cal E_2$ possess locally free
resolutions of type~\refeq{eq:Phi(A, B)},
they are cokernels of $\Phi_1=\Phi(A, B_1)$ and $\Phi_2=\Phi(A, B_2)$ respectively.

Equivalence of $\cal E_1$  and $\cal E_2$ means that there exists an isomorphism $\phi:D(p)\ra D(p)$ identical on
$D_0(p)$ such that there is an isomorphism $\cal E_2\xra{\xi} \phi^*(\cal E_1)$.
By Lemma~\ref{lemma:lift R-bundles} $\xi$ can be uniquely lifted to a morphism of resolutions
\begin{equation}\label{eq:morph}
\begin{xy}
(5,0)*+{\O_{D(p)}(-d+2, 0)\oplus \O_{D(p)}}="1";
(38,0)*+{\cal E_2}="2";
(5,-15)*+{\O_{D(p)}(-d+2, 0)\oplus \O_{D(p)}}="3";
(38,-15)*+{\phi^*(\cal E_1)}="4";
{\ar@{->}"1";"2"};
{\ar@{->}"3";"4"};
{\ar@{->}^{
\smat{\bar a& \bar b \\0&\bar d}
}"1";"3"};
{\ar@{->}^{\xi}"2";"4"};
(-45,0)*+{2\O_{D(p)}(-d+2, -1)}="5";
(-45,-15)*+{2\O_{D(p)}(-d+2, -1)}="6";
{\ar@{->}^{\smat{a&b\\c&d}}"5";"6"};
{\ar@{->}^-{\phi^*(\Phi_1)}"6";"3"};
{\ar@{->}^-{\Phi_2}"5";"1"};
(-70, 0)*+{0}="7";
(-70,-15)*+{0}="8";
{\ar@{->}"8";"6"};
{\ar@{->}"7";"5"};
(50,0)*+{0}="9";
(50,-15)*+{0.}="10";
{\ar@{->}"2";"9"};
{\ar@{->}"4";"10"};
\end{xy}
\end{equation}
Note that from the uniqueness of the lifting it follows that  both matrices $\smat{a&b\\c&d}$ and $\smat{\bar a& \bar b \\0&\bar d}$ are invertible. Let $\bar b=\sum \bar b_{ij} x_0^{d-2-i-j}x_1^ix_2^j$.

We are going to show now that for some $\mu\in \k^*$ the matrix
 $B_2-\mu B_1$ satisfies the tangent equations~\refeq{eq:tangent equations},
 i.~e., $B_2-\mu B_1\in T_A(X_8)$. So $B_1$
and $B_2$ represent the same element in $\P N_A$.
Let us present here a detailed proof.

One can assume without loss of generality that $A$ is as in~\refeq{eq:A special}.
Let
\[
\smat{\alpha&\beta&\gamma\\
0&1&0\\
0&0&1
}:\P_2\ra \P_2,\quad \point{u_0, u_1, u_2}\mto \langle(u_0, u_1, u_2)
\smat{\alpha&\beta&\gamma\\
0&1&0\\
0&0&1
}\rangle
\]
be the restriction of $\phi$ to $D_0(p)$.
Let
\[
B_1= \pmat{
\xi_0x_0&\xi_{00}x_0^{d-1} \\
\eta_0x_0& \eta_{00}x_0^{d-1}
}\mod (x_1, x_2), \quad  B_2=
\pmat{
\mu_0x_0&
\mu_{00}x_0^{d-1}\\
\nu_0x_0&
\nu_{00}x_0^{d-1}
}\mod (x_1, x_2).
\]
Then
\[
\Phi_1=\pmat{
u_1&u_1A_{10}x_0^{d-2}+u_2A_{01}x_0^{d-2}+u_1x_1P_1+u_1x_2Q_1+u_2x_2R_1
\\
u_2&u_1B_{10}x_0^{d-2}+u_2B_{01}x_0^{d-2}+u_1x_1P_2+u_1x_2Q_2+u_2x_2R_2
}+
\pmat{\xi_0&\xi_{00}x_0^{d-2}\\\eta_0&\eta_{00}x_0^{d-2}}u_0,
\]
\[
\Phi_2=\pmat{
u_1&u_1A_{10}x_0^{d-2}+u_2A_{01}x_0^{d-2}+u_1x_1P_1+u_1x_2Q_1+u_2x_2R_1
\\
u_2&u_1B_{10}x_0^{d-2}+u_2B_{01}x_0^{d-2}+u_1x_1P_2+u_1x_2Q_2+u_2x_2R_2
}+
\pmat{\mu_0&\mu_{00}x_0^{d-2}\\
\nu_0&\nu_{00}x_0^{d-2}}u_0,
\]
and using $u_0x_1=u_0x_2=0$ we conclude that $\phi^*(\Phi_1)$ equals
\begin{equation}\label{eq:pull back matrix}
\begin{split}
\pmat{
u_1+\beta u_0 &(u_1+\beta u_0)A_{10}x_0^{d-2}+(u_2+\gamma u_0)A_{01}x_0^{d-2}
\\
u_2+\gamma u_0&(u_1+\beta u_0)B_{10}x_0^{d-2}+(u_2+\gamma u_0)B_{01}x_0^{d-2}
}
+
\pmat{\xi_0&\xi_{00}x_0^{d-2}\\\eta_0&\eta_{00}x_0^{d-2}}\alpha u_0+\\
+
\pmat{
0&(u_1+\beta u_0)x_1P_1+(u_1+\beta u_0)x_2Q_1+(u_2+\gamma u_0)x_2R_1\\
0&(u_1+\beta u_0)x_1P_2+(u_1+\beta u_0)x_2Q_2+(u_2+\gamma u_0)x_2R_2}=\\
\pmat{
u_1+\beta u_0 &(u_1+\beta u_0)A_{10}x_0^{d-2}+(u_2+\gamma u_0)A_{01}x_0^{d-2}
\\
u_2+\gamma u_0&(u_1+\beta u_0)B_{10}x_0^{d-2}+(u_2+\gamma u_0)B_{01}x_0^{d-2}
}
+
\pmat{\xi_0&\xi_{00}x_0^{d-2}\\\eta_0&\eta_{00}x_0^{d-2}}\alpha u_0+\\
+
\pmat{
0&u_1x_1P_1+u_1x_2Q_1+u_2x_2R_1\\
0&u_1x_1P_2+u_1x_2Q_2+u_2x_2R_2}=\\
\pmat{
u_1&u_1A_{10}x_0^{d-2}+u_2A_{01}x_0^{d-2}+u_1x_1P_1+u_1x_2Q_1+u_2x_2R_1
\\
u_2&u_1B_{10}x_0^{d-2}+u_2B_{01}x_0^{d-2}+u_1x_1P_2+u_1x_2Q_2+u_2x_2R_2
}+\\
\pmat{
\beta+\xi_0\alpha& (\beta A_{10}+\gamma A_{01}+\xi_{00}\alpha)x_0^{d-2}\\
\gamma +\eta_0\alpha&  (\beta B_{10}+\gamma B_{01}+ \eta_{00}\alpha)x_0^{d-2}
}u_0,
\end{split}
\end{equation}
Let us consider the equality $\smat{a&b\\c&d}\cdot \phi^*(\Phi_1)=\Phi_2\cdot \smat{\bar a& \bar b \\0&\bar d}$.

For the entry 1.1 this gives us the equality
\[
a(u_1+\beta u_0)+b(u_2+\gamma u_0)+(a\xi_0+b\eta_0)\alpha u_0=\bar a u_1+\bar a \mu_0 u_0
\]
and hence the comparison of the coefficients yields
\begin{equation}\label{eq:1.1}
a=\bar a,\quad b=0, \quad \beta +\xi_0\alpha = \mu_0.
\end{equation}

For the entry 2.1 this gives us the equality
\[
c(u_1+\beta u_0)+d(u_2+\gamma u_0)+(c\xi_0+d\eta_0)\alpha u_0=\bar a u_2+\bar a \nu_0 u_0
\]
and hence
\begin{equation}\label{eq:2.1}
c=0,\quad d=\bar a,\quad \gamma+\eta_0\alpha=\nu_0.
\end{equation}
Taking into account $b=0$ from~\refeq{eq:1.1} and  restricting the equality for the entry 1.2 to $D_1(p)$ gives
\begin{multline*}
a(u_1A_{10}x_0^{d-2}+u_2A_{01}x_0^{d-2})+a(\beta A_{10}+\gamma A_{01}+\xi_{00}\alpha)x_0^{d-2}u_0=\\
\bar b_{00} u_1+\bar b_{00}\mu_0 u_0+\bar d(
u_1A_{10}x_0^{d-2}+u_2A_{01}x_0^{d-2})
+\bar d \mu_{00}x_0^{d-2}u_0
\end{multline*}
and hence
\begin{equation}\label{eq:1.2}
aA_{10}=\bar b_{00}+\bar d A_{10}, \quad a A_{01}=\bar d A_{01},\quad
a(\beta A_{10}+\gamma A_{01}+\alpha\xi_{00})=\mu_0\bar b_{00}+\bar d\mu_{00}.
\end{equation}
Using $c=0$  from~\refeq{eq:1.2} and restricting the equality for the entry 2.2 to $D_1(p)$ we obtain
\begin{multline*}
d(u_1B_{10}x_0^{d-2}+u_2B_{01}x_0^{d-2})+d(\beta B_{10}+\gamma B_{01}+ \eta_{00}\alpha)x_0^{d-2}u_0=\\
\bar b_{00} u_2+\bar b_{00}\nu_0 u_0+\bar d(
u_1B_{10}x_0^{d-2}+u_2B_{01}x_0^{d-2})
+\bar d \nu_{00}x_0^{d-2}u_0
\end{multline*}
and hence, using $d=a$ from~\refeq{eq:1.2} and~\refeq{eq:1.1}
\begin{equation}\label{eq:2.2}
a B_{10}=\bar d B_{10},\quad aB_{01}=\bar b_{00}+\bar d B_{01},\quad
a(\beta B_{10}+\gamma B_{01}+\alpha \eta_{00})=\bar b_{00}\nu_0+\bar d\nu_{00}.
\end{equation}
From~\refeq{eq:1.1} and~\refeq{eq:2.1} one obtains
$\beta=\mu_0-\alpha\xi_0$ and  $\gamma=\nu_0-\alpha \eta_0$. Then using~\refeq{eq:1.2} we get
\begin{multline*}
\bar d\mu_{00}-a\alpha\xi_{00}=a\beta A_{10}+a\gamma A_{01}-\mu_0\bar b_{00}=
a(\mu_0-\alpha\xi_0)A_{10}+a(\nu_0-\alpha \eta_0)A_{01}-\mu_0A_{10}(a-\bar d)=\\
A_{10}(\bar d\mu_0-a\alpha\xi_0)+aA_{01}\nu_0-a\alpha \eta_0 A_{01}=\\
A_{10}(\bar d\mu_0-a\alpha\xi_0)+\bar d A_{01}\nu_0-a\alpha \eta_0 A_{01}=\\
A_{10}(\bar d\mu_0-a\alpha\xi_0)+A_{01}(\bar d\nu_0-a\alpha \eta_0).
\end{multline*}
Analogously using~\refeq{eq:2.2} we get
\begin{multline*}
\bar d\nu_{00}-a\alpha\eta_{00}=a\beta B_{10}+a\gamma B_{01}-\nu_0\bar b_{00}=\\
a(\mu_0-\alpha\xi_0)B_{10}+a(\nu_0-\alpha \eta_0)B_{01}-\nu_0B_{01}(a-\bar d)=\\
B_{10}(\bar d \mu_0-a\alpha\xi_0)+B_{01}(\bar d\nu_0-a\alpha\eta_0).
\end{multline*}
Therefore, $\bar dB_1-a\alpha B_2$ satisfies~\refeq{eq:tangent equations}, hence $B_1-(\bar d^{-1}a\alpha)\cdot B_2\in T_AX'$. This means that $B_1$ and $B_2$ define the same point in $\P N_A$.

{``$\Leftarrow$''.}
Let now $B_1$ and $B_2$
 be two equivalent normal vectors at $A\in X'$. Without loss of generality we assume $A$ to be as in~\refeq{eq:A special}.
 Let
$\Phi_1=\Phi(A, B_1)$ and $\Phi_2=\Phi(A, B_2)$
be the matrices defining as in~\refeq{eq:Phi(A, B)} the sheaves
 $\cal E_1$ and $\cal E_2$ respectively.
Since $B_1$ and $B_2$ define the same point in $\P N_A$,
it follows that
\[
B_2-\alpha \cdot B_1\in T_A(X_8)
\]
for some $\alpha\in \k^*$.

Let
Let
\[
B_1= \pmat{
\xi_0x_0&\xi_{00}x_0^{d-1} \\
\eta_0x_0& \eta_{00}x_0^{d-1}
}\mod (x_1, x_2), \quad  B_2=
\pmat{
\mu_0x_0&
\mu_{00}x_0^{d-1}\\
\nu_0x_0&
\nu_{00}x_0^{d-1}
}\mod (x_1, x_2).
\]
Take \[
\beta=\mu_0-\xi_0\alpha,\quad \gamma=\nu_0-\eta_0 \alpha,
\]
and let
\begin{equation}\label{eq:L automorphism}
\phi_1=\smat{\alpha&\beta&\gamma\\
0&1&0\\
0&0&1
}:\P_2\ra \P_2,\quad \point{u_0, u_1, u_2}\mto \langle(u_0, u_1, u_2)
\smat{\alpha&\beta&\gamma\\
0&1&0\\
0&0&1
}\rangle.
\end{equation}
Note that the automorphisms of the form~\refeq{eq:L automorphism} are exactly the automorphisms of $D_1\iso \P_2$ acting identically on $L$.

Consider
\(
\phi:D(p)\ra D(p)
\)
such that $\phi|_{D_1}=\phi_1$ and $\phi|_{D_0}=\id_{D_0}$.
Using the tangent equations~\refeq{eq:tangent equations} and that $u_0x_1=u_0x_2=0$ one checks that  $\phi^*(\Phi_1)=\Phi_2$.
Indeed, by~\refeq{eq:pull back matrix}
\begin{multline*}
\phi^*(\Phi_1)=
\pmat{
u_1&u_1A_{10}x_0^{d-2}+u_2A_{01}x_0^{d-2}+u_1x_1P_1+u_1x_2Q_1+u_2x_2R_1\\
u_2&u_1B_{10}x_0^{d-2}+u_2B_{01}x_0^{d-2}+u_1x_1P_2+u_1x_2Q_2+u_2x_2R_2}+
\\
\pmat{
\beta +\xi_0\alpha &(\xi_{00}\alpha+\beta A_{10}+\gamma A_{01})x_0^{d-2}\\
\gamma +\eta_0\alpha &(\eta_{00}\alpha +\beta B_{10}+\gamma B_{01})x_0^{d-2}} u_0=
\\
\pmat{
u_1&u_1A_{10}x_0^{d-2}+u_2A_{01}x_0^{d-2}+u_1x_1P_1+u_1x_2Q_1+u_2x_2R_1\\
u_2&u_1B_{10}x_0^{d-2}+u_2B_{01}x_0^{d-2}+u_1x_1P_2+u_1x_2Q_2+u_2x_2R_2}+
\pmat{\mu_0&\mu_{00}x_0^{d-2}\\\nu_0&\nu_{00}x_0^{d-2}} u_0= \Phi_2.
\end{multline*}
Therefore, there is an
isomorphism $\phi^*(\cal E_1)\iso \cal E_2$,
which means that the sheaves $\cal E_1$ and $\cal E_2$ are equivalent.
\end{proof}

This proposition immediately implies the main statement of this note.

\begin{tr}\label{tr: main}
Let $\tilde M=\Bl_{M'}(M)$. Then the exceptional divisor of this blow up consists of the equivalence classes of $R$-bundles.
\end{tr}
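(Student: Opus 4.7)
The plan is to combine three ingredients: the standard description of the exceptional divisor of a blow up along a smooth subvariety, the principal bundle structure $\nu:X\to M$ established in Section~\ref{section: quotient}, and the preceding proposition which classifies $R$-bundles up to equivalence. The proof is essentially a matter of chasing the identification of fibres.

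First I would recall that in Section~\ref{section: singular locus} it was shown that $M'$ is smooth of codimension $2$ in $M$, so that the exceptional divisor $E\subset\tilde M$ is the projectivization $\P(N_{M'/M})$ of the normal bundle. In particular, the fibre of $E$ over a point $[\cal F]=(C,p)\in M'$ is canonically $\P(T_{[\cal F]}M/T_{[\cal F]}M')$. The goal is therefore to exhibit, for each such $[\cal F]$, a bijection between $\P(T_{[\cal F]}M/T_{[\cal F]}M')$ and equivalence classes of $R$-bundles on $D(p)$ arising from matrices $A\in X'$ with $\nu(A)=[\cal F]$.

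Next I would pass to the parameter space $X$. Since $\nu:X\to M$ is a principal $\P G$-bundle and $X'=\nu^{-1}(M')$, the differential $d\nu$ gives surjections $T_AX\twoheadrightarrow T_{[\cal F]}M$ and $T_AX'\twoheadrightarrow T_{[\cal F]}M'$ with the same kernel, namely the tangent space to the $\P G$-orbit through $A$. Consequently $d\nu$ descends to a canonical isomorphism
\[
N_A=T_AX/T_AX'\xrightarrow{\sim} T_{[\cal F]}M/T_{[\cal F]}M',
\]
and hence $\P N_A\cong \P(T_{[\cal F]}M/T_{[\cal F]}M')$. Combining this with the bijection $\P N_A\leftrightarrow \{\text{equivalence classes of }\cal E(A,B)\}$ furnished by the previous proposition yields the desired set-theoretic identification of the fibre of $E$ over $[\cal F]$ with the equivalence classes of $R$-bundles supported on $D(p)$.

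The main technical point, and the step that needs most care, is verifying that this bijection is intrinsic to $[\cal F]$ — that is, independent of the chosen lift $A\in\nu^{-1}([\cal F])$. For this I would note that if $A_2=gA_1h^{-1}$ for $(g,h)\in \P G$, then the construction of $\Phi(A,B)$ in~\refeq{eq:Phi(A, B)} is functorial in base change of the resolution: the pair $(g,h)$ induces an isomorphism between the resolutions of $\cal E(A_1,B_1)$ and $\cal E(A_2,B_2)$ whenever $B_2$ corresponds to $B_1$ under the $\P G$-action on tangent vectors, so the two $R$-bundles are even isomorphic on $D(p)$ (with $\phi=\id$), hence \textit{a fortiori} equivalent in the sense of Definition~\ref{df:equivalence R bundles}. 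Fitting these identifications together over the whole of $M'$ then shows that the points of the exceptional divisor are exactly the equivalence classes of $R$-bundles, completing the proof.
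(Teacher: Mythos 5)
Your proposal is correct and follows essentially the same route as the paper: the paper simply states that the preceding proposition ``immediately implies'' the theorem, leaving implicit exactly the identifications you spell out, namely $E\iso\P(N_{M'/M})$ for the smooth codimension-$2$ centre $M'$, the isomorphism $T_AX/T_AX'\iso T_{[\cal F]}M/T_{[\cal F]}M'$ coming from the principal $\P G$-bundle $\nu$, and the independence of the resulting bijection from the choice of lift $A$. Your additional verification of that independence via the $\P G$-equivariance of the construction $\Phi(A,B)$ is a useful (and correct) elaboration of a step the paper takes for granted.
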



\begin{thebibliography}{10}

\bibitem{BeilinsonEng}
A.A. Beilinson.
\newblock {Coherent sheaves on ${\mathbb{P}}^n$ and problems of linear
  algebra.}
\newblock {\em Funct. Anal. Appl.}, 12:214--216, 1979.

\bibitem{DrezetMaican4m}
Jean-Marc Dr{\'e}zet and Mario Maican.
\newblock On the geometry of the moduli spaces of semi-stable sheaves supported
  on plane quartics.
\newblock {\em Geom. Dedicata}, 152:17--49, 2011.

\bibitem{Freiermuth-Diplom}
Hans~Georg Freiermuth.
\newblock On the moduli space ${M}\sb {P}(\mathbb {P}\sb 3)$ of semi-stable
  sheaves on $\mathbb{P}\sb 3$ with {H}ilbert polynomial ${P}(m)=3m+1$.
\newblock Diplomarbeit, Universut\"at Kaiserslautern, Germany, December 2000.

\bibitem{Freiermuth}
Hans~Georg Freiermuth and G{\"u}nther Trautmann.
\newblock On the moduli scheme of stable sheaves supported on cubic space
  curves.
\newblock {\em Amer. J. Math.}, 126(2):363--393, 2004.

\bibitem{EGA-IV-II}
A.~Grothendieck.
\newblock \'{E}l\'ements de g\'eom\'etrie alg\'ebrique. {IV}. \'{E}tude locale
  des sch\'emas et des morphismes de sch\'emas. {II}.
\newblock {\em Inst. Hautes \'Etudes Sci. Publ. Math.}, (24):231, 1965.

\bibitem{IenaTrmPaper1}
O.~{Iena} and G.~{Trautmann}.
\newblock {Modification of the Simpson moduli space $M_{3m+1}(\mathbb P_2)$ by
  vector bundles (I)}.
\newblock {\em ArXiv e-prints}, December 2010.

\bibitem{MyGermanDiss}
Oleksandr {Iena}.
\newblock {\em Modification of Simpson moduli spaces of $1$-dimensional sheaves
  by vector bundles, an experimental example}.
\newblock {PhD} thesis in {M}athematics, Technische Universit\"at
  Kaiserslautern, Germany, 2009.

\bibitem{LePotier}
J.~Le~Potier.
\newblock Faisceaux semi-stables de dimension {$1$} sur le plan projectif.
\newblock {\em Rev. Roumaine Math. Pures Appl.}, 38(7-8):635--678, 1993.

\bibitem{Maican6m+1}
M.~{Maican}.
\newblock {On the moduli space of semi-stable plane sheaves with Euler
  characteristic one and supported on sextic curves}.
\newblock {\em ArXiv e-prints}, April 2011.

\bibitem{MaicanHilb}
M.~{Maican}.
\newblock {Relative Hilbert schemes and moduli spaces of torsion plane
  sheaves}.
\newblock {\em ArXiv e-prints}, July 2013.

\bibitem{MaicanTwoSemiSt}
Mario Maican.
\newblock {On two notions of semistability.}
\newblock {\em Pac. J. Math.}, 234(1):69--135, 2008.

\bibitem{MaicanDuality}
Mario Maican.
\newblock {A duality result for moduli spaces of semistable sheaves supported
  on projective curves.}
\newblock {\em Rend. Semin. Mat. Univ. Padova}, 123:55--68, 2010.

\bibitem{TrautmannTikhMark}
Dimitri Markushevich, Alexander~S. Tikhomirov, and G{\"u}nther Trautmann.
\newblock Bubble tree compactification of moduli spaces of vector bundles on
  surfaces.
\newblock {\em Cent. Eur. J. Math.}, 10(4):1331--1355, 2012.

\bibitem{Okonek-Schneider-Spindler}
Christian Okonek, Michael Schneider, and Heinz Spindler.
\newblock {\em Vector bundles on complex projective spaces}, volume~3 of {\em
  Progress in Mathematics}.
\newblock Birkh\"auser Boston, Mass., 1980.

\bibitem{Simpson1}
Carlos~T. Simpson.
\newblock Moduli of representations of the fundamental group of a smooth
  projective variety. {I}.
\newblock {\em Inst. Hautes \'Etudes Sci. Publ. Math.}, (79):47--129, 1994.

\end{thebibliography}

\def\cprime{$'$} \def\cprime{$'$}

\end{document}